\documentclass[a4paper]{amsart}

\usepackage[T1]{fontenc}
\usepackage[english]{babel}
\usepackage[alphabetic]{amsrefs}
\usepackage{amssymb}
\usepackage{geometry}
\usepackage{fullpage}
\usepackage[shortlabels]{enumitem}
\usepackage{graphicx}
\usepackage{hyperref}
\usepackage{mathrsfs}
\usepackage{mathtools}

\newtheorem{theorem}{Theorem}[section]
\newtheorem{lemma}[theorem]{Lemma}
\newtheorem{proposition}[theorem]{Proposition}
\newtheorem{corollary}[theorem]{Corollary}

\newtheorem{remark}[theorem]{Remark}
\numberwithin{equation}{section}

\def\N{\mathbb{N}}
\def\Z{\mathbb{Z}}
\def\C{\mathbb{C}}
\newcommand{\Q}{{\mathbb{Q}}}

\newcommand{\floor}[1]{\left\lfloor #1 \right\rfloor}

\allowdisplaybreaks[4]

\title{Schur positivity of nabla on Petrie symmetric functions}

\author{Menghao Qu}
\address{Scuola Normale Superiore\\
	Piazza dei Cavalieri 7,
	56126 Pisa\\ Italy}\email{menghao.qu@sns.it}

\begin{document}
\begin{abstract}
The Petrie symmetric function $G(k,n)$, introduced by Grinberg, is defined as the sum of monomial symmetric functions $m_\lambda$ indexed by partitions $\lambda\vdash n$ satisfying $\lambda_1<k$. This article demonstrates that the Schur positivity pattern of $\nabla^r G(k,n)$ for all $r\geq 1$ depends exclusively on whether $k$ divides $n$, thus answering an open problem of Bergeron noted in Grinberg's work.
\end{abstract}

\maketitle

\section{Introduction}
The theory of symmetric functions is fundamental to several major areas of mathematics, most notably algebraic combinatorics and representation theory. A symmetric function is termed Schur positive if its expansion coefficients in the Schur basis belong to $\N[q,t]$, the set of polynomials in $q$ and $t$ with nonnegative integer coefficients. This property provides a fundamental link between the theory of symmetric functions and the representation theory of the symmetric group $\mathfrak{S}_n$. In many important cases, such positivity reflects the existence of a bigraded $\mathfrak{S}_n$-module whose bigraded Frobenius characteristic is given by the symmetric function in question. Two prominent realizations of this correspondence are the modified Macdonald polynomials \cites{MR2138143} (associated with the Garsia--Haiman module \cites{MR1214091,MR1839919}) and $\nabla e_n$ (associated with the diagonal harmonics \cites{MR1256101,MR1918676}). The nabla operator $\nabla$, introduced by Bergeron and Garsia \cites{MR1726826} and defined by its diagonal action on the modified Macdonald basis, has profoundly shaped modern algebraic combinatorics.

A celebrated result in this area is the shuffle theorem, which expresses $\nabla e_n$ as a combinatorial sum over parking functions, as originally conjectured in \cite{MR2115257}. Even before Carlsson and Mellit \cite{MR3787405} provided a formal proof, this conjecture inspired significant generalizations and refinements \cites{MR1803316,MR2255191,MR2418288,MR2957232,MR3556418}. Among these, Haglund, Morse, and Zabrocki \cite{MR2957232} introduced the creation operators $\C_a$ for $a\in\Z$. For any composition $\alpha=(\alpha_1,\dots,\alpha_{\ell})\vDash n$, the successive application of these operators to the constant $1$ yields the symmetric function $\C_{\alpha}=\C_{\alpha}[X;q]:=\C_{\alpha_1}\cdots\C_{\alpha_{\ell}}(1)$, a fundamental building block in the theory. The algebraic decomposition $e_n = \sum_{\alpha\vDash n} \C_{\alpha}$ gave rise to the compositional shuffle conjecture, which explicitly formulates the action of $\nabla$ on $\C_{\alpha}$. In particular, $\nabla\C_\alpha)$ is Schur positive. This follows from the compositional shuffle theorem, which expresses the combinatorial side of $\nabla\C_\alpha$ as a positively weighted sum of vertical LLT polynomials \cite{MR1434225}, whose Schur positivity was established by Grojnowski and Haiman \cite{grojnowski2007affine}. Inspired by these foundational developments, the study of Schur positivity for the nabla operator acting on various families of symmetric functions remains an active research direction.

In a seminal article \cite{MR1803316}, a series of conjectures was presented concerning the Schur positivity of $\nabla$ acting on several foundational bases, namely Schur functions, monomial symmetric functions, and modified Hall--Littlewood polynomials. To the best of our knowledge, a proof of the Schur positivity of $\nabla s_{\lambda}$, up to a sign, is still lacking, although its monomial expansion has been independently obtained through various methods \cites{MR4930329,extendingsf}. Regarding the modified Hall--Littlewood polynomials, we recently settled two related conjectures for the case of two-column partitions \cite{qu26} by establishing identities that express them in terms of known Schur-positive symmetric functions. Turning to monomial symmetric functions, one might expect $m_{\mu}$ to admit a $\C_{\alpha}$-expansion, drawing inspiration from the Schur positivity of $\nabla \C_{\alpha}$. Sergel \cite{MR3940644} demonstrated that $m_{n,1^k}$ admits an expansion in $\C_{\alpha}$ with coefficients in $\N[q]$, successfully resolving the case of hook partitions. In a previous joint work with Xin \cite{MR4921577}, we established a recurrence relation for the two-column case utilizing the $\C_a$ operators, which yielded a similar expansion in the $\C_{\alpha}$ with coefficients in $\N[q]$. It is important to note that the set $\{\C_{\alpha} : \alpha \vDash n\}$ is not a basis for $\Lambda^{(n)}$. Because of this, systematically extending these expansions to general cases poses a significant challenge. Recently, Qiu and Zhang \cite{qiuzhang26} extended this recursive approach to arbitrary partitions. Their approach relies on the fact that the Loehr--Warrington conjecture \cite{MR2418288} was proved by Blasiak, Haiman, Morse, Pun, and Seelinger \cite{MR4930329}, who provided an LLT expansion of $\nabla^r s_\lambda$ with integral coefficients. By combining their work with both the Schur positivity of LLT polynomials due to Grojnowski and Haiman \cite{grojnowski2007affine} and the integrality of the inverse Kostka matrix established by Eğecioğlu and Remmel \cite{MR1034417}, Qiu and Zhang proved the Schur positivity of $(-1)^{|\mu|-\ell(\mu)}\nabla^r m_{\mu}$.

The Petrie symmetric function $G(k,n)$, introduced by Grinberg~\cite{MR4511157} and also appearing in \cite{MR4388836} under the name truncated homogeneous symmetric function, is defined by
\begin{equation}
G(k,n)=\sum_{\mu\vdash n, \mu_1<k} m_\mu.
\end{equation}
In particular, we have $G(2,n)=e_n$, and $G(k,n)=h_n$ whenever $k>n$. This family of symmetric functions has garnered considerable interest due to the striking properties of its Schur expansions \cites{MR4169831,MR4556182}. Naturally, this prompts an investigation into the Schur positivity of its image under the $\nabla$ operator. The classification of this exact positivity pattern was posed as an open problem by Bergeron and recorded in \cite{MR4511157}. Since signed Schur positivity for $\nabla^r m_\mu$ already involves the nontrivial sign $(-1)^{|\mu|-\ell(\mu)}$, the answer for Petrie symmetric functions is far from obvious.

In a previous joint work with Xin \cite{MR4921577}, we established a symmetric function identity relating $m_{k^a}$, $G(k,ak)$, and the $\C_{a}$ operators, which prompted us to conjecture the Schur positivity of $(-1)^{ak}\nabla G(k,ak)$. Building upon this identity and the recent recursive framework introduced by Qiu and Zhang \cite{qiuzhang26}, we resolve this conjecture. Furthermore, for the case where $k \nmid n$, we derive analogous identities that establish additional Schur positivity results. Specifically, we prove the following:

\begin{theorem}\label{thm-k-mid-n}
For integers $k\geq 2$, $a\geq 1$, and $r\geq 1$, $(-1)^{ak}\nabla^{r} G(k,ak)$ is Schur positive.
\end{theorem}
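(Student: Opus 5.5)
The plan is to express $G(k,ak)$ through monomial symmetric functions, for which signed positivity of $\nabla^r$ is already known, and then check that the signs line up. The starting point is the identity from the joint work with Xin \cite{MR4921577}, which relates $m_{k^a}$, $G(k,ak)$ and the operators $\C_a$. The cleanest form to aim for is an expansion
\[
G(k,ak)=\sum_{\mu} c_\mu\, m_\mu,
\]
with $\mu$ running over partitions of $ak$ and integer coefficients $c_\mu$ whose signs are controlled. The simplest candidate route is the generating function. The truncated complete homogeneous functions satisfy $\sum_n G(k,n)z^n=\prod_i (1+x_iz+\dots+x_i^{k-1}z^{k-1})=\prod_i \frac{1-x_i^kz^k}{1-x_iz}$, which gives
\[
G(k,n)=\sum_{j\geq 0} h_{n-jk}\,(-1)^j e_j[p_k].
\]
For $n=ak$ each term $h_{(a-j)k}\,e_j[p_k]$ can be expanded in monomials. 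Every $m_\mu$ that appears has all parts divisible by $k$ except those coming from $h_{(a-j)k}$. This suggests regrouping instead according to the set of variables whose exponent is a multiple of $k$.

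A more structural version of the same idea uses inclusion–exclusion directly on the definition. Since $\sum_{\mu\vdash n}m_\mu=h_n$, we get
\[
G(k,n)=h_n-\sum_{\mu_1\geq k}m_\mu .
\]
I would organise the correction terms through the $\C_a$ identity, in the form $G(k,ak)=\sum (\pm)\,\C$-type pieces plus $m$-pieces, rather than through raw inclusion–exclusion.

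The key step is the following. Applying $\nabla^r$ and using Qiu–Zhang's result that $(-1)^{|\mu|-\ell(\mu)}\nabla^r m_\mu$ is Schur positive, it suffices that every term $c_\mu m_\mu$ in the expansion satisfies
\[
\operatorname{sign}(c_\mu)\,(-1)^{ak-\ell(\mu)}=(-1)^{ak},
\]
that is, $\operatorname{sign}(c_\mu)=(-1)^{\ell(\mu)}$. Any $\C_\alpha$-pieces must likewise come with sign $(-1)^{ak}$, since $\nabla^r\C_\alpha$-type terms are Schur positive (via the compositional shuffle / LLT results, \cite{grojnowski2007affine}). Equivalently, I want to show that $(-1)^{ak}G(k,ak)$ lies in the cone spanned by $\{(-1)^{ak-\ell(\mu)}m_\mu\}$ and the positive $\C$-pieces. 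A check for $k=2$: $G(2,2a)=e_{2a}=m_{1^{2a}}$ has $\ell=2a$ and sign $+=(-1)^{2a}$, which is consistent.

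The main obstacle is that $G(k,ak)$ is itself a positive sum of $m_\mu$ with mixed parities of $\ell(\mu)$. A naive expansion therefore fails, and the cancellation has to come from the Xin–Qu identity. Concretely, I expect that identity to have the form
\[
m_{k^a}=\pm G(k,ak)+\sum(\text{terms built from } \C_{a}\text{ applied to } G(k,\cdot) \text{ and } m\text{'s}).
\]
Inverting it, I would prove by induction on $a$ that $(-1)^{ak}G(k,ak)$ equals $(-1)^{ak-a}m_{k^a}$, which has the correct Qiu–Zhang sign since $\ell=a$, plus terms whose $\nabla^r$-images are Schur positive by induction and by Qiu–Zhang's recursive framework for $\nabla^r$ of $\C_a$ applied to such functions. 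Verifying that the recursion of \cite{qiuzhang26} commutes appropriately with the $\C_a$ operators appearing in the identity, uniformly in $r$, is the step I expect to require the most work. I would first try to reduce it to the case $r=1$ via the operator relation $\nabla\C_a\nabla^{-1}$ used by Qiu–Zhang.
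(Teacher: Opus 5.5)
Your proposal has a genuine gap, and it sits exactly where the proof has to do its work. The monomial route cannot work at all. Since $\{m_\mu\}$ is a basis, $G(k,ak)=\sum_{\mu\vdash ak,\ \mu_1<k}m_\mu$ is \emph{the} monomial expansion, with every $c_\mu=1$ and lengths of both parities once $k\geq 3$ (e.g.\ $G(3,3)=m_{21}+m_{111}$). So everything rests on the $\C$-identity, and there the sign goes the wrong way. The Qu--Xin identity reads
\[
(-1)^{ak}G(k,ak)=(-1)^{(k-1)a}m_{k^a}-q^{k-1}\sum_{i=1}^{a}\C_{ki}\Bigl((-1)^{(k-1)(a-i)}m_{k^{a-i}}\Bigr),
\]
with only rectangular monomials (no $G(k,\cdot)$) on the right, so there is nothing to induct on. Each $(-1)^{(k-1)(a-i)}m_{k^{a-i}}=F_{\{k_1,\dots,k_{a-i}\}}$ is a $\Q_{\ge0}[q]$-combination of $\C_\alpha$'s by Qiu--Zhang. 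Because $ki\geq1$, the same holds for $\C_{ki}(\cdots)$, so its $\nabla^r$-image is Schur positive. The correction terms therefore enter with a \emph{negative} sign. Your claimed decomposition ``$(-1)^{(k-1)a}m_{k^a}$ plus terms whose $\nabla^r$-images are Schur positive'' is false. What you actually obtain is a difference of two Schur positive functions, which proves nothing.

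The missing idea is a \emph{refined} $\C_\alpha$-expansion of $(-1)^{(k-1)a}m_{k^a}$. In it, the very same functions $\C_{ki}\bigl((-1)^{(k-1)(a-i)}m_{k^{a-i}}\bigr)$ must appear with coefficients that dominate $q^{k-1}$ coefficientwise. The paper obtains this by running the Qiu--Zhang recursion with $c=k$ and $A=\{k_1,\dots,k_{a-1}\}$, then dividing by $a!$. The $\beta_T$-terms have $\beta_T=(|T|+1)![k]_q$, since every $x\in T$ equals $c$, and contribute $\frac{i}{a}[k]_q$. The $\alpha_{T,b}$-terms with $b=k$ have $\alpha_{T,k}=|T|!\,q^{k-1}$ and contribute $\frac{a-i}{a}q^{k-1}$. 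All remaining terms form some $R_{k,a}\in\sum_\alpha\Q_{\ge0}[q]\C_\alpha$. Since $\frac{i}{a}[k]_q+\frac{a-i}{a}q^{k-1}=q^{k-1}+\frac{i}{a}[k-1]_q$, the negative $q^{k-1}$-terms cancel exactly. This leaves $(-1)^{ak}G(k,ak)=\sum_i\frac{i}{a}[k-1]_q\,\C_{ki}(\cdots)+R_{k,a}$, which lies in the $\Q_{\ge0}[q]$-cone of the $\C_\alpha$.

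The step you flag as hardest is not an issue: you need no commutation of $\nabla$ with $\C_a$ and no reduction to $r=1$. Schur positivity of $\nabla^r\C_\alpha$ for every $r\geq1$ gives coefficients in $\Q_{\ge0}[q,t]$. The integrality of $\langle\nabla^r m_\mu,s_\lambda\rangle$, which your proposal never invokes, then upgrades these to $\N[q,t]$. Note also that the input you cite, Schur positivity of $(-1)^{|\mu|-\ell(\mu)}\nabla^r m_\mu$, is too weak for terms of the form $\C_{ki}(F)$. Positivity of $\nabla^r F$ says nothing about $\nabla^r\C_{ki}F$; you need the $\C_\alpha$-expansions of the signed monomials themselves.
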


\begin{theorem}\label{thm-k-nmid-n}
For integers $k\geq 2$, $n\geq 1$, and $r\geq 1$ with $k\nmid n$, $(-1)^{n-1}\nabla^{r} G(k,n)$ is Schur positive. 
\end{theorem}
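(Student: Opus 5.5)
We reduce Theorem~\ref{thm-k-nmid-n} to known signed positivity by first finding an identity for $G(k,n)$ in the spirit of the $k\mid n$ case. Write $n=ak+b$ with $1\le b\le k-1$. The generating function is
\[
\sum_n G(k,n)\,z^n=\prod_i \frac{1-(x_iz)^k}{1-x_iz}=\Big(\sum_{j\ge0}(-1)^j e_j[X]\,z^{kj}\,p\text{-plethysm}\Big)\cdot H(z),
\]
more precisely $\sum_n G(k,n)z^n = H(z)\,E(-z^k)[p_k\text{-plethysm}]$. Hence
\[
G(k,n)=\sum_{j\ge0}(-1)^j\,(e_j\circ p_k)\,h_{n-kj}.
\]
This alone does not give positivity, so the plan follows the strategy of the paper for $k\mid n$. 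We first establish an identity expressing $G(k,ak+b)$, up to the sign $(-1)^{n-1}$, as a combination of the $\C_\alpha$ (or of terms $\C_{b}$ applied to symmetric functions already controlled) together with signed monomials $m_\mu$. The coefficients should lie in $\N[q]$ after multiplying by the appropriate sign. Concretely, we aim to extend the identity of \cite{MR4921577} relating $m_{k^a}$, $G(k,ak)$ and $\C_a$. Removing the first part of a monomial-indexed term via the operator $\C_b$ should produce an expression of the shape
\[
(-1)^{n-1}G(k,n)=\sum_{\mu}c_\mu(q)\,(-1)^{|\mu|-\ell(\mu)}m_\mu+\sum_\alpha d_\alpha(q)\,\C_\alpha,
\qquad c_\mu,d_\alpha\in\N[q].
\]
We would derive it by induction on $a$, using the commutation relations of the $\C_a$ with multiplication by $h$ and $e$ via plethystic formulas.

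Next we apply $\nabla^r$. For $r\ge1$, the terms $(-1)^{|\mu|-\ell(\mu)}\nabla^r m_\mu$ are Schur positive by Qiu--Zhang \cite{qiuzhang26}. For the $\C_\alpha$ terms, the compositional shuffle theorem yields positivity for $r=1$. For $r\ge1$ we instead rely on the recursive framework of \cite{qiuzhang26}, which handles $\nabla^r$ applied to $\C_\alpha$-type expressions through the LLT expansion of \cite{MR4930329}. An alternative route avoids $\C_\alpha$ entirely: expand $G(k,n)$ in monomials, $G(k,n)=\sum_{\mu_1<k}m_\mu$. The sign $(-1)^{n-\ell(\mu)}$ is not constant, so we must group terms. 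The key combinatorial step is an involution or identity pairing the partitions with $\mu_1<k$ so that the wrong-sign terms cancel against parts of right-sign ones modulo the Schur-positive cone of $\nabla^r$.

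The main obstacle is the first step: finding the precise identity whose coefficients have the uniform sign $(-1)^{n-1}$ when $k\nmid n$. We expect the distinction from the $k\mid n$ case to come from the fact that the pure term $m_{k^a}$ (sign $(-1)^{ak-a}$) cannot occur when $b\ne0$. The plan is to guess the identity from small cases, such as $k=3$ and $n=4,5$, checked against $\nabla$ computations, and then prove it by the plethystic generating function above combined with the Qiu--Zhang recursion.
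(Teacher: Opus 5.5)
Your generating-function derivation of $G(k,n)=\sum_{j\ge0}(-1)^j m_{k^j}h_{n-kj}$ is correct, using $e_j[p_k]=m_{k^j}$. After that the proposal stops exactly where the proof has to begin. The identity that is supposed to give $(-1)^{n-1}G(k,n)$ a positive structure is never stated; you only plan to guess it from small cases, and the ``induction on $a$ via commutation relations'' is unspecified. So there is not yet an argument for the theorem.

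The missing idea is that no induction, involution or recursion is needed. Write $n=ak+b$ with $1\le b\le k-1$. Then
\[
(-1)^{n-1}G(k,n)=q^{b-1}\sum_{i=0}^{a}\C_{n-ki}\bigl((-1)^{(k-1)i}m_{k^i}\bigr).
\]
This follows by computing $\C_r$ on $m_{k^a}=e_a[p_k[X]]$ directly from the definition. The substitution $X\mapsto X-(1-q^{-1})/z$ sends $p_k[X]$ to $p_k[X]-(1-q^{-k})/z^k$. The addition formula for $e_a$, together with $h_j[(1-u)v]=(1-u)v^j$ for $j\ge1$, then gives
\[
\C_r m_{k^a}=\Bigl(-\frac1q\Bigr)^{r-1}\Bigl(m_{k^a}h_r+\sum_{i=0}^{a-1}(-1)^{a-i}\frac{q^k-1}{q^k}\,m_{k^i}h_{k(a-i)+r}\Bigr).
\]
Substitute this into the right-hand side. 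The coefficient of $m_{k^j}h_{n-kj}$ telescopes, via $\sum_{i=j+1}^{a}(q^{k(i-a)}-q^{k(i-1-a)})=1-q^{k(j-a)}$, to $(-1)^{n-j-1}$. That is exactly $(-1)^{n-1}$ times Grinberg's formula. For example, $k=2$, $n=3$ gives $e_3=\C_3(1)+\C_1(-m_2)$.

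Once you have this identity, the hypothesis $k\nmid n$ enters only through $n-ki\ge b\ge1$. Hence every operator is some $\C_j$ with $j\ge1$. When $k\mid n$, the $i=a$ summand is $\C_0(\cdots)$, and $\C_0$ does not preserve positivity; that, rather than the absence of a pure $m_{k^a}$ term, is the real distinction between the two cases.

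Of the two shapes you float, it is the second one ($\C_j$ applied to already-controlled functions) that occurs. The function $(-1)^{(k-1)i}m_{k^i}$ is exactly the signed monomial $(-1)^{|\mu|-\ell(\mu)}m_\mu$ for $\mu=k^i$. By Qiu--Zhang it lies in the $\Q_{\ge0}[q]$-span of the $\C_\alpha$, and prepending $\C_{n-ki}$ keeps it there.

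Two further points in your second step need correcting.
\begin{enumerate}
\item Schur positivity of $\nabla^r\C_\alpha$ for every $r\ge1$ comes from the compositional $(km,kn)$-shuffle theorem together with LLT positivity. The Qiu--Zhang recursion uses this as input; it does not supply it.
\item The Qiu--Zhang expansion of a signed monomial has only rational coefficients, because one divides $\tilde F_A$ by multiplicity factorials. So you only obtain Schur coefficients in $\Q_{\ge0}[q,t]$. You must add the integrality $\langle\nabla^r m_\mu,s_\lambda\rangle\in\Z[q,t]$ to conclude that they lie in $\N[q,t]$.
\end{enumerate}

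Your alternative involution route is too unspecified to assess.
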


Together, these two theorems completely determine the sign pattern for Schur positivity of $\nabla^{r} G(k,n)$, thereby resolving the open problem posed in \cite{MR4511157}*{Conjecture 5.2}. The remainder of this paper is organized as follows. Section 2 provides the requisite background on symmetric functions and reviews key components of the Qiu--Zhang recursion. In Section 3, the proof is divided into two cases according to whether $k$ divides $n$. We first handle the divisible case $n=ak$, combining a symmetric function identity from \cite{MR4921577} with a specialized form of the Qiu--Zhang recursion to obtain a nonnegative rational $\C_{\alpha}$-expansion of $(-1)^{ak}G(k,ak)$. We then treat the remaining case $k\nmid n$ by expressing $G(k,n)$ in terms of the $\C_a$-operators applied to signed monomial symmetric functions indexed by rectangular partitions. The Schur positivity of $(-1)^{n-1}\nabla^r G(k,n)$ in the nondivisible case then follows from the Schur positivity of $\nabla^r$ acting on these functions.

\section{Preliminary}

In this section, we review the essential definitions and fix the notation used throughout the paper. While we only briefly recall the standard definitions concerning symmetric functions and Macdonald polynomials, comprehensive treatments can be found in \cites{MR1354144, MR1676282, MR2371044, MR2538310}. In addition, we state several key results from the recent work of \cite{qiuzhang26}, which will be crucial for our subsequent proofs.

\subsection{Symmetric functions}
A \emph{partition} $\lambda$ is a finite nonincreasing sequence $\lambda_{1}\geq\lambda_{2}\geq\cdots \geq\lambda_{\ell}>0$ of positive integers. Each $\lambda_{i}$ is called the $i$-th \emph{part} of $\lambda$. The number of parts, denoted by $\ell(\lambda)$, is called the \emph{length} of $\lambda$, and the sum of its parts, $|\lambda|:=\sum_{i=1}^{\ell}\lambda_{i}$, is called the \emph{size} of $\lambda$. We write $\lambda\vdash n$ to denote that $\lambda$ is a partition of size $n$.

The \emph{Young diagram} of $\lambda$ is an array of unit squares, called
cells, with $\lambda_{i}$ cells in the $i$th row (from the bottom), with the first cell in each row left-justified. The \emph{conjugate partition}, $\lambda'$, is the partition whose Young diagram is obtained from $\lambda$ by reflecting across the diagonal $y=x$.

Similarly, a \emph{composition} $\alpha=(\alpha_{1},\alpha_{2},\cdots,\alpha_{\ell})$ of an integer $n$ is an ordered sequence of parts $\alpha_{i}>0$ that sum up to $n$. We write $\alpha\vDash n$ when $\alpha$ is a composition of $n$. The length $\ell(\alpha)$ of a composition $\alpha$ is the number of its parts.

Let $\Lambda$ denote the ring of symmetric functions over $\Q(q,t)$. It is equipped with a natural grading $\Lambda = \bigoplus_{n \geq 0} \Lambda^{(n)}$, where $\Lambda^{(n)}$ consists of homogeneous symmetric functions of degree $n$, defined by assigning $\deg(p_k) = k$. The \emph{monomial} $\{m_{\lambda}\}_{\lambda\vdash n}$, \emph{elementary} $\{e_{\lambda}\}_{\lambda\vdash n}$, \emph{complete homogeneous} $\{h_{\lambda}\}_{\lambda\vdash n}$, \emph{power sum} $\{p_{\lambda}\}_{\lambda\vdash n}$, and \emph{Schur} $\{s_{\lambda}\}_{\lambda\vdash n}$ functions constitute five classical bases for the space $\Lambda^{(n)}$. Throughout, we adopt the convention that $h_r=0$ for $r<0$ and $h_0=1$. 

The fundamental \emph{involution} $\omega:\Lambda \rightarrow \Lambda$ acts on the Schur basis via
\begin{equation*}
\omega(s_{\lambda})=s_{\lambda'}    
\end{equation*}
Furthermore, the Schur functions form an orthonormal basis with respect to the \emph{Hall scalar product}, satisfying $\langle s_{\lambda},s_{\mu}\rangle=\delta_{\lambda,\mu}$, where $\delta_{\lambda,\mu}$ denotes the Kronecker delta. We will adopt the notation $F[X] := F(x_1, x_2, \dots)$ to denote a symmetric function evaluated on the variable set $X$.

For $\mu\vdash n$, we denote by
\begin{equation*}
\tilde{H}_{\mu}[X;q,t]=\sum\limits_{\lambda\vdash n}\tilde{K}_{\lambda,\mu}(q,t)s_{\lambda},
\end{equation*}
the \emph{modified Macdonald polynomial}, where the coefficients
\begin{equation*}
\tilde{K}_{\lambda,\mu}(q,t)=t^{n(\mu)}K_{\lambda,\mu}(q,t^{-1})\in\N[q,t],
\end{equation*}
are the \emph{modified $q,t$-Kostka polynomials}. The family $\{\tilde{H}_{\mu}[X;q,t]\}_{\mu\vdash n}$ also forms a basis for $\Lambda^{(n)}$. 

Let $\nabla$ be the linear operator on symmetric functions which satisfies
\begin{equation*}
\nabla \tilde{H}_{\mu}[X;q,t]=T_{\mu}\tilde{H}_{\mu}[X;q,t],    
\end{equation*}
where $T_{\mu}=q^{n(\mu')}t^{n(\mu)}$ with $n(\mu)=\sum_{i}(i-1)\mu_i$.

We will extensively use plethystic calculus throughout this work. Let $E(t_{1},t_{2},t_{3},\cdots)$ be a formal series of rational functions in the parameters $t_{1},t_{2},t_{3},\cdots$ Define the \emph{plethystic substitution} of $E$ into $p_{k}$, denoted $p_{k}[E]$, by
\begin{equation*}
p_{k}[E]=E(t_{1}^{k},t_{2}^{k},\cdots).
\end{equation*}
For $f=\sum_{\lambda}c_{\lambda}(q,t)p_{\lambda}\in \Lambda$, we have $f[E]=\sum_{\lambda}c_{\lambda}(q,t)\prod_{k=1}^{\ell(\lambda)}p_{k}[E]$. Plethystic notation is a powerful tool that simplifies many calculations in the theory of symmetric functions and it is highly amenable to implementation in symbolic computation software such as Maple, Mathematica, and SageMath. Readers unfamiliar with plethystic notation can refer to \cite{MR2371044}*{Chapter 1}. For convenience, we collect below several identities, stated without proof, that will be used extensively in the subsequent sections.

\begin{lemma}[addition formulas]\label{lem-addition-formulas}
\begin{align}
e_{n}[X+Y]&=\sum\limits_{i=0}^{n}e_{i}[X]e_{n-i}[Y],\quad e_{n}[X-Y]=\sum\limits_{i=0}^{n}e_{i}[X]e_{n-i}[-Y],\\
h_{n}[X+Y]&=\sum\limits_{i=0}^{n}h_{i}[X]h_{n-i}[Y],\quad h_{n}[X-Y]=\sum\limits_{i=0}^{n}h_{i}[X]h_{n-i}[-Y].
\end{align}
\end{lemma}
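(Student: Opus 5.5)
The addition formulas are stated without proof in the paper, but the argument is short and I would give it through generating functions. The plan is to encode $e_n$ and $h_n$ by their generating series written in power sums, and to use the fact that plethystic substitution is additive on power sums: $p_k[X+Y]=p_k[X]+p_k[Y]$ and $p_k[X-Y]=p_k[X]-p_k[Y]$. Both equalities follow directly from the definition of $p_k[E]$ given above, applied to a formal series $E$ that is a sum or difference of alphabets.

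\textbf{Step 1 (generating functions).} Recall the classical identities
\begin{equation*}
\sum_{n\geq 0} h_n[X] z^n=\exp\Big(\sum_{k\geq 1}\frac{p_k[X]}{k}z^k\Big),\qquad
\sum_{n\geq 0} e_n[X] z^n=\exp\Big(\sum_{k\geq 1}(-1)^{k-1}\frac{p_k[X]}{k}z^k\Big).
\end{equation*}
In plethystic calculus these serve as the definition of $h_n[E]$ and $e_n[E]$ for an arbitrary $E$. This is consistent with the paper's convention $f[E]=\sum_\lambda c_\lambda\prod p_{\lambda_i}[E]$, because $h_n$ and $e_n$ expand in power sums with exactly the coefficients produced by these exponentials.

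\textbf{Step 2 (multiplicativity).} Substitute $X+Y$ into these series. Additivity of $p_k$ turns the exponent into a sum, so the exponential factors as a product:
\begin{equation*}
\sum_n h_n[X+Y]z^n=\Big(\sum_i h_i[X]z^i\Big)\Big(\sum_j h_j[Y]z^j\Big).
\end{equation*}
Comparing coefficients of $z^n$ gives the convolution formula for $h_n[X+Y]$. The case $X-Y$ works the same way. Setting $p_k[-Y]:=-p_k[Y]$ makes $X-Y=X+(-Y)$ an instance of the same additivity, and $h_j[-Y]$ is defined by the same exponential with $-p_k[Y]$ in place of $p_k[Y]$. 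The identical argument with the signed exponent handles $e_n$.

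\textbf{Step 3 (main obstacle).} The only delicate point is making sure $h_{j}[-Y]$ and $e_j[-Y]$ are interpreted plethystically. In particular, $h_j[-Y]=(-1)^j e_j[Y]$ is not $h_j$ evaluated at the negated variables. If a closed form is needed, it follows by comparing the two exponentials. Beyond this, the proof is a formal power series comparison; no convergence issues arise, since everything is graded and each coefficient involves only finitely many terms.
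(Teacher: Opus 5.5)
Your proof is correct. The paper gives no proof of this lemma to compare against: it lists the addition formulas among plethystic identities ``stated without proof'' and points the reader to a standard reference on plethystic notation. Your generating-function argument is the standard justification behind that citation.

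Your write-up also makes explicit what the paper actually relies on. The argument uses nothing about $X$ and $Y$ except two facts: additivity $p_m[E\pm F]=p_m[E]\pm p_m[F]$, and that $f\mapsto f[E]$ is the homomorphism determined by $p_m\mapsto p_m[E]$. So it holds for arbitrary formal expressions. This generality is needed in the proof of Lemma~\ref{lem-crmka}, where the formula is applied with $p_k[X]$ and $(1-q^{-k})/z^k$ in place of $X$ and $Y$.

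Likewise, your closing remark $h_j[-Y]=(-1)^je_j[Y]$ has a companion $e_j[-Y]=(-1)^jh_j[Y]$, obtained from the same comparison of exponentials. Both are the degree-$j$ case of Lemma~\ref{lem-plethystic-minus}, and the companion is exactly the step the paper takes right after invoking the addition formula there.

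One notational point if you carry your argument into that setting: your generating variable $z$ is a scalar kept outside the plethystic brackets, whereas the $z$ inside $\C_a$ is a plethystic variable. Rename one of them.
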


\begin{lemma}\label{lem-plethystic-minus}
For $f[X]\in \Lambda^{(n)}$,
\begin{equation}
f[-X]=(-1)^{n}\omega f[X].
\end{equation}
\end{lemma}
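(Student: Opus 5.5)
The identity $f[-X]=(-1)^n\omega f[X]$ is linear in $f$, and plethysm is a ring homomorphism in $f$ for a fixed alphabet. So I will verify it on a generating family and extend. The natural family is the power sums $p_\lambda$ with $\lambda\vdash n$, which form a basis of $\Lambda^{(n)}$ over $\Q(q,t)$.

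First, recall how the plethystic minus sign acts in the convention fixed above. The parameters $q,t$ are treated as scalars, and $p_k[-X]=-p_k[X]$, since $-X$ is the formal negative of the alphabet $x_1+x_2+\cdots$. Applying multiplicativity,
\[
p_\lambda[-X]=\prod_{i=1}^{\ell(\lambda)}p_{\lambda_i}[-X]=(-1)^{\ell(\lambda)}p_\lambda[X].
\]

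Second, recall the action of the involution: $\omega p_\lambda=(-1)^{|\lambda|-\ell(\lambda)}p_\lambda$. This is standard, and it follows from the relation $\omega p_k=(-1)^{k-1}p_k$. That relation in turn comes from applying $\omega$ (defined by $s_\lambda\mapsto s_{\lambda'}$, which is equivalent to $h_n\leftrightarrow e_n$) to the Newton identities, or to the generating functions $\sum h_nz^n=\exp\bigl(\sum p_kz^k/k\bigr)$ and $\sum e_nz^n=\exp\bigl(\sum(-1)^{k-1}p_kz^k/k\bigr)$. Hence for $\lambda\vdash n$,
\[
(-1)^n\omega p_\lambda=(-1)^{n+n-\ell(\lambda)}p_\lambda=(-1)^{\ell(\lambda)}p_\lambda=p_\lambda[-X].
\]

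Finally, write $f=\sum_{\lambda\vdash n}c_\lambda(q,t)p_\lambda$ and use linearity of both sides. The $q,t$ coefficients pass through untouched, because $-X$ negates only the alphabet variables. This gives the claim for all $f\in\Lambda^{(n)}$.

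The only point needing care is the convention that $q,t$ are not negated, that is, that $p_k[-X]=-p_k[X]$ rather than $-p_k[X]$ combined with $q\mapsto -q$-type effects. Once the convention is fixed as above, there is no real obstacle.
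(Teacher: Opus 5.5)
The paper gives no proof of this lemma: it appears among the standard plethystic identities ``stated without proof.'' Your argument (check the identity on the power-sum basis using $p_\lambda[-X]=(-1)^{\ell(\lambda)}p_\lambda[X]$ and $\omega p_\lambda=(-1)^{|\lambda|-\ell(\lambda)}p_\lambda$, then extend $\Q(q,t)$-linearly) is the standard proof and is correct.

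One small clarification on your closing remark. The convention that matters is not whether $q,t$ get negated. It is that $-X$ is the plethystic negative, where the constant $-1$ is not raised to the $k$th power, so $p_k[-X]=-p_k[X]$. This differs from the alphabet $\{-x_1,-x_2,\dots\}$, under which $p_k\mapsto(-1)^kp_k$ and one would get $f(-x_1,-x_2,\dots)=(-1)^nf$ with no $\omega$.
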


\begin{lemma}\label{lem-plethystic-h}
If $u$ and $v$ are monomials, then we have
\begin{equation}
h_{n}[(1-u)v]=\begin{cases}
1, & \text{ if } n=0,\\
(1-u)v^{n}, & \text{ if } n\geq 1.
\end{cases}
\end{equation}
\end{lemma}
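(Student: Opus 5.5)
The plan is to compute the generating function $\sum_{n\ge0}h_n[(1-u)v]\,z^n$ in closed form and read off its coefficients. The only ingredient is the plethystic rule recalled above: since $u$ and $v$ are monomials, $p_k[v]=v^k$ and $p_k[uv]=u^kv^k$. By linearity of $p_k[\cdot]$ in its argument this gives
$$p_k[(1-u)v]=(1-u^k)v^k .$$

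First, I would use the standard exponential generating identity
$$\sum_{n\ge 0}h_n[X]z^n=\exp\Bigl(\sum_{k\ge1}\frac{p_k[X]}{k}z^k\Bigr).$$
This identity holds formally in $\Lambda[[z]]$, so it survives any plethystic substitution for $X$. Substituting $X=(1-u)v$, the exponent splits as
$$\sum_{k\ge1}\frac{(vz)^k}{k}-\sum_{k\ge1}\frac{(uvz)^k}{k}.$$
Using $\exp\bigl(\sum_k w^k/k\bigr)=(1-w)^{-1}$, the generating function collapses to
$$\frac{1-uvz}{1-vz}=\sum_{n\ge0}v^nz^n-uvz\sum_{n\ge0}v^nz^n .$$

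Finally, I would extract the coefficient of $z^n$. For $n=0$ it is $1$. For $n\ge1$ it is $v^n-uv\cdot v^{n-1}=(1-u)v^n$, which is the claim.

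As a cross-check, the same result follows from Lemma~\ref{lem-addition-formulas}. Write $h_n[v-uv]=\sum_i h_i[v]\,h_{n-i}[-uv]$. Here $h_i[v]=v^i$, and by Lemma~\ref{lem-plethystic-minus} $h_j[-uv]=(-1)^je_j[uv]$, which vanishes for $j\ge2$ because $uv$ is a single monomial. So only the terms $j=0,1$ survive, giving the same two-term answer. There is no real obstacle here. The one point requiring care is the convention that a monomial behaves as a single "variable", so that $p_k[v]=v^k$ and $e_j[uv]=0$ for $j\ge2$.
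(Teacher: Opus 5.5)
Your proof is correct. The paper gives no proof of this lemma: it is listed among the standard plethystic identities ``stated without proof'', so there is no argument of the paper to compare against.

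Your generating-function computation
\[
\sum_{n\ge0}h_n[(1-u)v]\,z^n=\frac{1-uvz}{1-vz}
\]
is a complete and standard proof. Your cross-check via Lemma~\ref{lem-addition-formulas} and Lemma~\ref{lem-plethystic-minus} is an equally complete second proof, and it uses only the tools the paper itself collects.

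The convention you flag is exactly the right one. For a monomial $w$ with coefficient $1$, one has $f[w]=f(w,0,0,\dots)$ for all $f\in\Lambda$, because both sides are $\Q(q,t)$-algebra homomorphisms in $f$ that agree on every $p_k$. This is what gives $h_i[v]=v^i$ and $e_j[uv]=0$ for $j\ge2$ in your second argument. It holds verbatim for Laurent monomials, which matters here: in the proof of Lemma~\ref{lem-crmka} the statement is applied with $u=q^{-k}$ and $v=z^{-k}$.
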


\subsection{Creation operators \texorpdfstring{$\C_a$}{C\_a}}
In \cite{MR2957232}, the authors introduced a family of creation operators $\C_a$ for $a\in\Z$. For any $f[X]\in \Lambda$ and $a\in \Z$, the action of $\C_a$ is defined by
\begin{equation}
\C_{a}f[X]=\left(-\frac{1}{q}\right)^{a-1}f\left[X-\frac{1-\frac{1}{q}}{z}\right]\sum_{n\geq 0}z^{n}h_{n}[X]\Bigg|_{z^a}.    
\end{equation}

For any $\alpha=(\alpha_{1},\alpha_{2},\cdots,\alpha_{\ell})\vDash n$, let 
\begin{equation}
\C_{\alpha}:=\C_{\alpha}[X;q]=\C_{\alpha_{1}}\C_{\alpha_{2}}\cdots \C_{\alpha_{\ell}}(1).
\end{equation}
Because it arises from a transformation of the Hall--Littlewood polynomials, the set $\{\C_{\lambda}:\lambda\vdash n\}$ forms a basis of $\Lambda^{(n)}$. In contrast, $\{\C_{\alpha}:\alpha\vDash n\}$ does not.

They also proved that $e_n=\sum_{\alpha\vDash n}\C_{\alpha}$. This yields a powerful framework for analyzing symmetric functions by reducing them to smaller, more manageable building blocks. A prime example of this is found in \cite{MR3787405}, where Carlsson and Mellit proved the compositional shuffle conjecture, thereby establishing the original shuffle theorem as a corollary. Later, Mellit \cite{MR4348234} successfully resolved the compositional $(km,kn)$-shuffle conjecture proposed in \cite{MR3556418}. Building upon the combinatorial interpretation of $\nabla^{r} \C_{\alpha}$ and the known properties of vertical LLT polynomials \cites{grojnowski2007affine}, one can deduce the following properties:

\begin{proposition}\label{prop-nabla-r-C-alpha}
For integer $r\geq 1$, $\alpha\vDash n$ and $\lambda\vdash n$, 
\begin{equation}
\langle \nabla^r \C_{\alpha},s_{\lambda}\rangle\in \N[q,t].
\end{equation}
\end{proposition}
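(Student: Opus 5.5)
The plan is to obtain Proposition~\ref{prop-nabla-r-C-alpha} as a consequence of the compositional shuffle theorems together with the Schur positivity of LLT polynomials. We do not compute anything directly.

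\textbf{Case $r=1$.} We first invoke the compositional shuffle theorem of Carlsson and Mellit~\cite{MR3787405}. In the form we need, it reads
\begin{equation*}
\nabla \C_{\alpha} = \sum_{P} t^{\mathrm{area}(P)} q^{\mathrm{dinv}(P)} x^{P},
\end{equation*}
where $P$ runs over parking functions whose Dyck path touches the diagonal according to $\alpha$. (Here $x^P$ is the monomial recording the cars of $P$, and it is exactly Carlsson--Mellit's result that supplies the $\C_\alpha$ on the left.) We then reorganize this sum in the standard way. Fix the underlying Dyck path $\pi$, which determines the area. For that fixed $\pi$, the sum over labelings equals $q^{\text{(path-dependent shift)}}$ times a vertical LLT polynomial $\mathrm{LLT}_\pi[X;q]$ attached to a tuple of columns read off from $\pi$. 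This is the observation of~\cite{MR2115257} and~\cite{MR1434225}. Hence
\begin{equation*}
\nabla\C_\alpha=\sum_{\pi} t^{\mathrm{area}(\pi)}q^{c(\pi)}\,\mathrm{LLT}_\pi[X;q].
\end{equation*}
Here $\pi$ ranges over Dyck paths touching the diagonal at positions prescribed by $\alpha$, and $c(\pi)\ge 0$.

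\textbf{Case $r\ge1$.} For general $r$ we use Mellit's proof~\cite{MR4348234} of the compositional $(km,kn)$ shuffle conjecture of~\cite{MR3556418}. Specialized to the rational slope $1/r$, i.e.\ the $(n,rn)$ case, it expresses $\nabla^r\C_\alpha$ in the same shape: a sum over $(n,rn)$-Dyck paths with the prescribed diagonal touching pattern, weighted by $t^{\mathrm{area}}q^{\mathrm{dinv\ shift}}$, times an LLT polynomial built from the path. If that general statement is awkward to quote, there is a fallback. Haglund, Morse and Zabrocki~\cite{MR2957232} already formulated the $\nabla^r$ version, via $r$-parking functions, and it is implied by Mellit's theorem.

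\textbf{Conclusion, and where care is needed.} Once $\nabla^r\C_\alpha$ is written this way, the coefficients are monomials in $q,t$ with nonnegative exponents. Grojnowski--Haiman~\cite{grojnowski2007affine} shows that every LLT polynomial is Schur positive with coefficients in $\N[q]$. Pairing with $s_\lambda$ therefore gives an element of $\N[q,t]$. The main obstacle is bookkeeping, not any new argument. One must check that the exponents in the LLT reformulation are genuinely nonnegative; for instance, the dinv shift must not introduce negative powers of $q$. One must also check the normalization of $\C_a$: the factor $(-1/q)^{a-1}$ and the convention $\C_\alpha=\C_{\alpha_1}\cdots\C_{\alpha_\ell}(1)$ need to match those used in the cited shuffle theorems, since a mismatched sign or $q$-power would break positivity. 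I would verify this normalization on small cases such as $\alpha=(1)$ and $(2)$, where $\C_{(1)}=e_1$, before quoting the theorems.
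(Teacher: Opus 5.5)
Your proposal is correct and matches the argument the paper relies on. That argument is the compositional shuffle theorem of Carlsson--Mellit \cite{MR3787405} for $r=1$, and Mellit's proof \cite{MR4348234} of the compositional $(km,kn)$-shuffle conjecture of \cite{MR3556418} for general $r$, each regrouped by Dyck path into LLT polynomials whose Schur positivity is the Grojnowski--Haiman theorem \cite{grojnowski2007affine}. The only change I would make is to drop the fallback citing \cite{MR2957232} for a $\nabla^r$ version, since the statement for $\nabla^r\C_\alpha$ is already the $(n,rn)$ case of \cite{MR3556418}, which Mellit proved.
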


\subsection{Qiu-Zhang's recursion}

Let us recall some notation from \cite{qiuzhang26}. A \emph{labeled multiset} $A$ is a finite multiset of positive integers in which identical elements are distinguished by subscripts. Specifically, if a value appears multiple times, its copies are labeled with subscripts $1, 2, \dots$; if it appears only once, it is written without a subscript. Since its labeled elements are pairwise distinct, a labeled multiset can be treated as a finite set: subsets, unions, differences, and set partitions of labeled multisets are understood in the ordinary sense. 

In algebraic expressions, the elements of a labeled multiset $A$ are evaluated strictly by their underlying integer values. Under this convention, the sum and length of $A$ are given by
\begin{equation}
S_{A}:=\sum_{x\in A}x \quad \text{ and }\quad \ell(A):=|A|.
\end{equation}
where $S_\varnothing=0$. By dropping the labels and arranging the values of $A$ in weakly decreasing order, one obtains a partition denoted by $\mu(A)$. To each labeled multiset $A$, one associates the signed monomial symmetric function:
\begin{equation}
F_A := (-1)^{|\mu(A)|-\ell(\mu(A))} m_{\mu(A)}=(-1)^{S_A-\ell(A)} m_{\mu(A)}.
\end{equation}
By convention, we set $F_{\varnothing}:=1$. Finally, a scaled variant of this function is given by
\begin{equation}
\tilde{F}_A := \left(\prod_{i\geq 1} m_i(A)!\right)F_A,    
\end{equation}
where $m_i(A)$ represents the multiplicity of the value $i$ within $A$ (which is equivalent to $m_i(\mu(A))$).

For example, let $A=\{1,2_1,2_2,2_3,3_1,3_2\}$. Then we yield $S_{A}=13$, $\ell(A)=6$, $\mu(A)=(3,3,2,2,2,1)$, $F_{A}=-m_{3,3,2,2,2,1}$, and $\tilde{F}_{A}=-12m_{3,3,2,2,2,1}$.

Qiu and Zhang proved the following recursion. 

\begin{lemma}\cite{qiuzhang26}*{Theorem 4.1}
Let $A$ be a labeled multiset of positive integers, and let $c\geq 1$ be an integer such that $x\geq c$ for all $x\in A$. Then
\begin{equation}\label{equ-qiu-zhang-recursion}
\tilde{F}_{A\cup\{c\}}-\sum_{j=1}^{c-1}q^{j-1}\C_j(\tilde{F}_{A\cup\{c-j\}})=\sum_{T\subseteq A}\left(\sum_{b=1}^{S_T}\alpha_{T,b}\,\C_{c+S_T-b}(\tilde{F}_{(A\backslash T)\cup\{b\}})+\beta_T\,\C_{c+S_T}(\tilde{F}_{A\backslash T})\right),
\end{equation}
where $\alpha_{T,b},\beta_T \in \N[q]$ are defined as follows. Set $\alpha_{\varnothing,b}=0$, and for $T\neq\varnothing$, define
\begin{equation}
\alpha_{T,b}=(|T|-1)!\sum_{x\in T, b\leq x}q^{(c+x-b-1) \bmod x},
\end{equation}
where $y \bmod x$ denotes the least nonnegative residue of $y$ modulo $x$. Furthermore, $\beta_T$ is defined by
\begin{equation}
\beta_T=(|T|+1)![c]_q+|T|!\sum_{x\in T}\left(q^c+q^{c+1}+\cdots+q^{x-1}\right),
\end{equation}
with $[c]_q := 1+q+\dots+q^{c-1}$.
\end{lemma}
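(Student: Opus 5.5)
The plan is to prove \eqref{equ-qiu-zhang-recursion} as a formal identity by computing both sides from the plethystic definition of $\C_a$. Write $\varepsilon=1-\frac1q$, so that $\C_a f=(-1/q)^{a-1}\,f[X-\varepsilon/z]\,\Omega[zX]\big|_{z^a}$, where $\Omega[zX]=\sum_{n}z^nh_n[X]$. The labeled-multiset normalization $\tilde F_A$ is built to make the monomial addition formula multiplicity-free. Concretely, since $m_\mu[X+Y]=\sum m_\nu[X]m_\rho[Y]$ over splittings $\mu=\nu\cup\rho$, the factorials in $\tilde F$ turn this into
\begin{equation*}
\tilde F_A[X+Y]=\sum_{T\subseteq A}\tilde F_{A\setminus T}[X]\,\tilde F_T[Y].
\end{equation*}
The signs also split, because $S_A-\ell(A)$ is additive. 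Applying this with $Y=-\varepsilon/z$ expresses every term $\C_j(\tilde F_{A\cup\{d\}})$ through sums over $T\subseteq A$ of $\tilde F_{A\setminus T}[X]$, times an explicit Laurent polynomial in $z$ coming from $\tilde F_{T\cup\{d\}}[-\varepsilon/z]$ or $\tilde F_T[-\varepsilon/z]$, times $\Omega[zX]$.

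The first key step is to evaluate $\tilde F_T[-\varepsilon/z]$ in closed form. Here I would expand the augmented monomial function in power sums. Since $p_k[-\varepsilon/z]=-(1-q^{-k})z^{-k}$ and all these are monomials in $z$, only a single $z$-power $z^{-S_T}$ survives, and the coefficient is a sum over set partitions of $T$ with Möbius weights. The key point is that the evaluation on an alphabet of the shape $(1-u)v$, as in Lemma~\ref{lem-plethystic-h}, collapses to connected contributions. This should produce the factor $(|T|-1)!$ times a polynomial in $q^{-1}$ attached to each $x\in T$, which is the source of the $(|T|-1)!$ in $\alpha_{T,b}$ and the $|T|!$ and $(|T|+1)!$ in $\beta_T$.

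The second step is to multiply by $\Omega[zX]$ and extract coefficients of $z^{c}$. On the left side, the elements $c$ and $c-j$ are forced into the subset whenever they are split off. The subtraction $\sum_{j}q^{j-1}\C_j(\tilde F_{A\cup\{c-j\}})$ is designed to telescope the $h$-factors coming from the new element, leaving only terms where some nonempty $T\subseteq A$ has been moved to the $Y$-side. I would then reassemble the right side term by term. Each factor $h_m[X]\,\tilde F_{A\setminus T}[X]$ is rewritten, by running the same addition formula backwards, as $\C_{c+S_T-b}$ applied to $\tilde F_{(A\setminus T)\cup\{b\}}$, or as $\C_{c+S_T}(\tilde F_{A\setminus T})$ for the $b$-free part. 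Matching coefficients then yields $\alpha_{T,b}$ and $\beta_T$. The hypothesis $x\ge c$ for all $x\in A$ guarantees that the geometric sums $q^c+\dots+q^{x-1}$ are genuine polynomials with nonnegative coefficients, and that the exponents reduce to $(c+x-b-1)\bmod x$.

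The main obstacle is this final matching. The $q$-powers arise from series like $(1-q^{-x})/(1-q^{-1})$ combined with the prefactors $(-1/q)^{a-1}$, and they must be shown to reorganize exactly into the residues $q^{(c+x-b-1)\bmod x}$ with all signs cancelling. I would attack it first for $|T|=1$, where $\alpha_{\{x\},b}=q^{(c+x-b-1)\bmod x}$ and $\beta_{\{x\}}=2[c]_q+q^c+\dots+q^{x-1}$, and check the identity by explicit coefficient extraction. I would then reduce general $T$ to this case via the connected-set-partition structure from the first step. As a fallback, if the direct bookkeeping becomes unmanageable, I would induct on $|A|$, peeling off one element of $A$ at a time and using the $|T|=1$ computation as the base.
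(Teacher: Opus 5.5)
\textbf{There is a genuine gap: the lemma is not a formal identity in the products your expansion produces.} Your first step is fine, and it can be done without set partitions. Write $-\varepsilon/z=(q^{-1}-1)z^{-1}$ and use one-letter evaluations. For $U\neq\varnothing$ this gives $\tilde F_U[-\varepsilon/z]=(-1)^{S_U}(|U|-1)!\,z^{-S_U}\sum_{x\in U}(1-q^{-x})$, and hence
\[
\C_d(\tilde F_B)=\Bigl(-\frac{1}{q}\Bigr)^{d-1}\sum_{U\subseteq B}g_U\,\tilde F_{B\setminus U}\,h_{d+S_U},\qquad g_U:=z^{S_U}\,\tilde F_U[-\varepsilon/z].
\]
The problem is the second step. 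Every operator $\C_d$ in \eqref{equ-qiu-zhang-recursion} has $d\geq 1$: on the left $1\le d\le c-1$, and on the right $d\ge c$. So after this expansion every term on both sides is a product $\tilde F_B\,h_m$ with $m\geq 1$, \emph{except} the single term $\tilde F_{A\cup\{c\}}$ on the left. Consequences:
\begin{enumerate}
\item[(i)] Nothing telescopes $\tilde F_{A\cup\{c\}}$ away, and running the addition formula backwards never creates it.
\item[(ii)] Coefficient matching cannot succeed. Also, ``matching coefficients yields $\alpha_{T,b},\beta_T$'' is ill-posed, since neither the products $\tilde F_Bh_m$ nor the functions $\C_d(\tilde F_B)$ are linearly independent.
\end{enumerate}

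Two small cases show what actually happens.
\begin{enumerate}
\item[(a)] For $A=\varnothing$, after your expansion the claim is equivalent to Newton's identity $\sum_{i=1}^{c}p_ih_{c-i}=ch_c$, using $\tilde F_{\{d\}}=(-1)^{d-1}p_d$. This is not a telescoping; note also that the surviving right-hand term is the $T=\varnothing$ term $[c]_q\C_c(1)$.
\item[(b)] For $c=1$, $A=\{2\}$ there is no subtraction at all. The left side is $\tilde F_{\{1,2\}}=-m_{21}$. The right side $2\C_1(\tilde F_{\{2\}})+q\C_2(\tilde F_{\{1\}})+(2+q)\C_3(1)$ expands to $-2p_1p_2-p_1h_2+3h_3$. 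Every $q$ cancels, and equality is a pure ring identity.
\end{enumerate}
So the obstacle you single out, reorganizing $q$-powers into residues, is not where the difficulty lies.

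What is missing is a multiplication rule that puts both sides into an honest basis. One option is
\[
(-1)^{d-1}p_d\,\tilde F_B=\tilde F_{B\cup\{d\}}-\sum_{x\in B}\tilde F_{(B\setminus\{x\})\cup\{x+d\}},
\]
combined with Newton's identities. Another is the expansion of $h_m\tilde F_B$ in the $\tilde F$'s: add nonnegative amounts to the elements of $B$ and adjoin a partition of the remainder. Only then can you compare the coefficient of each $m_\kappa$.

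The real combinatorics lies in the merged elements this produces. For example, the right-hand terms $\tilde F_{(A\setminus\{x\})\cup\{b\}}\,h_{c+x-b}$ with $c\le b<x$ occur with nonzero coefficient $\pm m_x(A)$ and have no formal partner on the left. It is in handling such terms that the hypothesis $x\ge c$ and the residues $(c+x-b-1)\bmod x$ must be accounted for. Your proposal contains no such step. Neither the reduction of general $T$ to $|T|=1$ ``via connected set partitions'' nor the fallback induction on $|A|$ comes with a mechanism. As written, the plan does not prove the lemma.
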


From this recursion, they derived the following results:

\begin{lemma}\cite{qiuzhang26}*{Theorem 4.2}\label{lem-qiu-zhang-tilde-F-C-expansion}
For any labeled multiset $A$ of positive integers,
\begin{equation}
\tilde{F}_{A}\in \sum_{\alpha\vDash S_{A}}\N[q]\C_{\alpha}.
\end{equation}
\end{lemma}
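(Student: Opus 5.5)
We would prove the statement by strong induction, using the recursion \eqref{equ-qiu-zhang-recursion} (Theorem 4.1 of \cite{qiuzhang26}) to express $\tilde{F}_A$ through functions indexed by strictly smaller data. We induct primarily on the total size $S_A$, and for fixed size on the length $\ell(A)$.

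\textbf{Base cases.} For $A=\varnothing$ we have $\tilde F_\varnothing=1=\C_{\varnothing}$, with the empty composition of $0$. For $A=\{c\}$ a singleton, we apply the recursion with the empty multiset. The only subset is $T=\varnothing$, so $\alpha_{\varnothing,b}=0$ and $\beta_\varnothing=[c]_q$. This gives
\[
\tilde F_{\{c\}}=\sum_{j=1}^{c-1}q^{j-1}\C_j(\tilde F_{\{c-j\}})+[c]_q\,\C_c(1).
\]
Since each $\tilde F_{\{c-j\}}$ has smaller size, induction on $c$ yields an $\N[q]$-combination of $\C_\alpha$ with $\alpha\vDash c$.

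\textbf{Inductive step.} Let $A$ be nonempty and let $c=\min A$ be its smallest element. Write $A=A'\cup\{c\}$, so that every $x\in A'$ satisfies $x\ge c$. Solving the recursion for $\tilde F_A$ gives
\[
\tilde F_A=\sum_{j=1}^{c-1}q^{j-1}\C_j(\tilde F_{A'\cup\{c-j\}})+\sum_{T\subseteq A'}\Big(\sum_{b=1}^{S_T}\alpha_{T,b}\C_{c+S_T-b}(\tilde F_{(A'\setminus T)\cup\{b\}})+\beta_T\C_{c+S_T}(\tilde F_{A'\setminus T})\Big).
\]
Every coefficient on the right lies in $\N[q]$. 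Every operator index is a positive integer: $j\ge1$, $c+S_T-b\ge c\ge1$ because $b\le S_T$, and $c+S_T\ge1$.

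We then check that each argument is strictly smaller than $A$ in the induction order.
\begin{itemize}
\item $A'\cup\{c-j\}$ has size $S_A-j<S_A$.
\item $(A'\setminus T)\cup\{b\}$ has size $S_A-c-S_T+b\le S_A-c<S_A$.
\item $A'\setminus T$ has size $S_A-c-S_T<S_A$.
\end{itemize}
Hence all arguments have strictly smaller size, and the induction on $S_A$ alone suffices. The induction hypothesis then writes each of them as an $\N[q]$-combination of $\C_\beta$ with $\beta$ a composition of the corresponding size $m$.

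\textbf{Composing with the creation operators.} For a positive integer $a$ we have $\C_a\C_\beta=\C_{(a,\beta)}$ by definition of $\C_\alpha$, and $(a,\beta)$ is a composition of $a+m$. In each term the operator index plus the argument's size equals $S_A$; for example $(c+S_T-b)+(S_A-c-S_T+b)=S_A$. So every resulting composition is a composition of $S_A$, and $\tilde F_A\in\sum_{\alpha\vDash S_A}\N[q]\C_\alpha$.

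\textbf{Main obstacle.} The substantive work lies entirely in the recursion itself, whose coefficients must be nonnegative; here it is assumed. The remaining point to verify is that no index of a $\C$ operator is zero or negative, since that would fall outside the composition framework. This holds because $b\le S_T$ and $c\ge1$.
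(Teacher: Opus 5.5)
Your proposal is correct and matches the route the paper indicates. The paper gives no separate proof; it only says that Qiu and Zhang derive this lemma from the recursion \eqref{equ-qiu-zhang-recursion}, and your strong induction on $S_A$ with $c=\min A$ does exactly that, after checking that all coefficients lie in $\N[q]$, all operator indices are positive, and every argument has strictly smaller size. Your separate singleton case and the secondary induction on $\ell(A)$ are harmless but redundant, since the general step already covers them.
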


\begin{lemma}\cite{qiuzhang26}\label{lem-qiu-zhang-rational-C-expansion}
For any partition $\mu$,
\begin{equation}
(-1)^{|\mu|-\ell(\mu)}m_{\mu}\in \sum_{\alpha\vDash |\mu|}\Q_{\geq 0}[q]\C_{\alpha}.
\end{equation}
\end{lemma}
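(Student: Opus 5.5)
The statement to prove is the last lemma: for any partition $\mu$, $(-1)^{|\mu|-\ell(\mu)}m_\mu$ lies in the $\Q_{\geq 0}[q]$-span of the $\C_\alpha$ with $\alpha\vDash|\mu|$. I would derive it directly from Lemma~\ref{lem-qiu-zhang-tilde-F-C-expansion} by a rescaling argument. The idea is that $\tilde F_A$ is just $F_A$ multiplied by a positive integer, and that integer depends only on $\mu(A)$.

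The first step is to choose, for the given partition $\mu=(\mu_1,\dots,\mu_\ell)$, a labeled multiset $A$ with $\mu(A)=\mu$. Concretely, take one element for each part of $\mu$, and attach subscripts $1,2,\dots$ to repeated values. By construction $S_A=|\mu|$, $\ell(A)=\ell(\mu)$ and $m_i(A)=m_i(\mu)$ for every $i$. Hence
\begin{equation*}
\tilde F_A=\Big(\prod_{i\geq1}m_i(\mu)!\Big)(-1)^{|\mu|-\ell(\mu)}m_\mu .
\end{equation*}

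The second step applies Lemma~\ref{lem-qiu-zhang-tilde-F-C-expansion}, which gives
\begin{equation*}
\tilde F_A=\sum_{\alpha\vDash|\mu|}c_\alpha(q)\,\C_\alpha \quad\text{with } c_\alpha\in\N[q].
\end{equation*}
Set $N_\mu:=\prod_i m_i(\mu)!$, a positive integer. Dividing by $N_\mu$ yields
\begin{equation*}
(-1)^{|\mu|-\ell(\mu)}m_\mu=\sum_{\alpha\vDash|\mu|}N_\mu^{-1}c_\alpha(q)\,\C_\alpha.
\end{equation*}
Each coefficient $N_\mu^{-1}c_\alpha(q)$ lies in $\Q_{\geq0}[q]$, since we divide polynomials with nonnegative integer coefficients by a positive integer. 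This is exactly the claim.

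There is no genuine obstacle here. The only point requiring care is bookkeeping: checking that the labeling convention makes $m_i(A)=m_i(\mu(A))$, and that the sign in $F_A$ matches $(-1)^{|\mu|-\ell(\mu)}$ because $S_A=|\mu|$ and $\ell(A)=\ell(\mu)$. All the substantive content, namely the nonnegativity coming from the Qiu--Zhang recursion~\eqref{equ-qiu-zhang-recursion}, is already contained in Lemma~\ref{lem-qiu-zhang-tilde-F-C-expansion}. The empty partition is covered trivially by $F_\varnothing=1=\C_{()}$.
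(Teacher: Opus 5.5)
Your argument is correct and is essentially the intended one: the paper does not prove this lemma itself but quotes it from Qiu--Zhang as a consequence of their Theorem 4.2 (Lemma~\ref{lem-qiu-zhang-tilde-F-C-expansion}), and the deduction is exactly yours. You pick $A$ with $\mu(A)=\mu$ and divide the $\N[q]$-expansion of $\tilde F_A$ by the positive integer $\prod_i m_i(\mu)!$.
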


\begin{lemma}\cite{qiuzhang26}\label{lem-qiu-zhang-integrality}
For any partition $\mu$ and integer $r\geq 1$,
\begin{equation}
\langle \nabla^r m_{\mu},s_{\lambda}\rangle\in \Z[q,t].
\end{equation}
\end{lemma}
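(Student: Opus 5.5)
The plan is to reduce to Schur functions and then quote the known integrality of $\nabla^r s_\lambda$. The $\C_\alpha$-expansion of Lemma~\ref{lem-qiu-zhang-tilde-F-C-expansion} cannot do this on its own. It gives
\[
\Bigl(\prod_i m_i(\mu)!\Bigr)(-1)^{|\mu|-\ell(\mu)}\,m_\mu\in\sum_{\alpha\vDash|\mu|}\N[q]\,\C_\alpha ,
\]
and then Proposition~\ref{prop-nabla-r-C-alpha} only shows that $\langle\nabla^r m_\mu,s_\lambda\rangle$ lies in $\frac{1}{\prod_i m_i(\mu)!}\Z[q,t]$. That denominator has to be removed by a separate argument.

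\textbf{Step 1: expand $m_\mu$ in Schur functions.} Write
\[
m_\mu=\sum_{\nu\vdash|\mu|} K^{-1}_{\mu\nu}\, s_\nu ,
\]
where $(K^{-1}_{\mu\nu})$ is the inverse Kostka matrix. This matrix is integral, since the Kostka matrix is unitriangular with integer entries. Alternatively, Eğecioğlu and Remmel \cite{MR1034417} give an explicit signed formula for its entries in terms of special rim hook tabloids. By linearity of $\nabla^r$, it is enough to show that $\langle\nabla^r s_\nu,s_\lambda\rangle\in\Z[q,t]$ for all $\nu,\lambda\vdash n$ and $r\geq1$.

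\textbf{Step 2: integrality of $\nabla^r s_\nu$.} Here I would invoke the proof of the Loehr--Warrington conjecture by Blasiak, Haiman, Morse, Pun and Seelinger \cite{MR4930329}. It expresses $\nabla^r s_\nu$, up to a global sign and a monomial factor, as a combination of LLT polynomials $\mathcal{G}_{\boldsymbol\nu}[X;q]$ with coefficients in $\Z[q,t]$. These coefficients come from weights of nested lattice paths and are signed monomials in $q,t$. By Grojnowski--Haiman \cite{grojnowski2007affine}, each such LLT polynomial has Schur coefficients in $\N[q]\subset\Z[q]$. Combining the two facts gives $\langle\nabla^r s_\nu,s_\lambda\rangle\in\Z[q,t]$.

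\textbf{Step 3: conclude.} Finite $\Z$-linear combinations of elements of $\Z[q,t]$ stay in $\Z[q,t]$, so Steps 1 and 2 give the claim.

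The main obstacle is Step 2. It depends entirely on the precise form of the Blasiak--Haiman--Morse--Pun--Seelinger formula, which lives in a different normalization (the $\nabla$ and $\omega$ conventions, and possibly a specialization of $q,t$). I would check that any normalizing factors introduced there are signs or monomials in $q,t$, so that integrality is preserved. If that check failed, the fallback is a cruder lattice argument. The Schur coefficients of $\nabla^r$ of any integral symmetric function lie in $\Z[q,t,q^{-1},t^{-1}]$ after clearing the Macdonald denominators. One would then combine this with the polynomiality in $q,t$ obtained from the rational expansion above (Lemma~\ref{lem-qiu-zhang-rational-C-expansion} together with Proposition~\ref{prop-nabla-r-C-alpha}) to land in $\Z[q,t]$. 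I would try the direct route through \cite{MR4930329} first.
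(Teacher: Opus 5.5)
Your proposal is correct and is essentially the argument the paper attributes to Qiu--Zhang. It writes $m_\mu$ in the Schur basis via the integral inverse Kostka matrix, then combines the Blasiak--Haiman--Morse--Pun--Seelinger LLT expansion of $\nabla^r s_\nu$ (integral coefficients) with Grojnowski--Haiman's Schur positivity of LLT polynomials. Your normalization worry is harmless, and your vaguer fallback about clearing Macdonald denominators is not needed. If that expansion only gave Laurent-integral coefficients, intersecting with the bound $\langle\nabla^r m_\mu,s_\lambda\rangle\in\frac{1}{\prod_i m_i(\mu)!}\Z[q,t]$, which you derived from the $\C_\alpha$-expansion, already forces the coefficients into $\Z[q,t]$.
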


By combining Proposition \ref{prop-nabla-r-C-alpha} with Lemmas \ref{lem-qiu-zhang-rational-C-expansion} and \ref{lem-qiu-zhang-integrality}, they obtain the Schur positivity of $\nabla$ on signed monomial symmetric functions.

\begin{corollary}\cite{qiuzhang26}*{Theorem 1.2}
For any partition $\mu$ and integer $r\geq 1$,
\begin{equation}
\langle (-1)^{|\mu|-\ell(\mu)}\nabla^r m_{\mu},s_{\lambda}\rangle\in \N[q,t].
\end{equation}
\end{corollary}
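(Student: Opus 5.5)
The plan is to combine the three ingredients just recalled, as the text preceding the statement indicates: a nonnegative rational $\C_\alpha$-expansion, positivity of $\nabla^r$ on each $\C_\alpha$, and integrality of the Schur coefficients of $\nabla^r m_\mu$. Positivity alone only gives nonnegative \emph{rational} coefficients, and integrality alone gives no sign control; together they force coefficients in $\N[q,t]$.

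\emph{Step 1: expand in the $\C_\alpha$.} Fix a partition $\mu\vdash n$ and $r\geq 1$. By Lemma~\ref{lem-qiu-zhang-rational-C-expansion}, write
\begin{equation*}
(-1)^{|\mu|-\ell(\mu)}m_\mu=\sum_{\alpha\vDash n}c_\alpha(q)\,\C_\alpha,\qquad c_\alpha(q)\in\Q_{\geq 0}[q].
\end{equation*}
This expansion need not be unique, since $\{\C_\alpha\}$ is not a basis, but any one such expansion suffices. Concretely, it can be obtained from Lemma~\ref{lem-qiu-zhang-tilde-F-C-expansion} applied to a labeled multiset $A$ with $\mu(A)=\mu$, after dividing by $\prod_i m_i(\mu)!$. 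This division is the only source of denominators.

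\emph{Step 2: apply $\nabla^r$ and pair with $s_\lambda$.} Since $\nabla^r$ is $\Q(q,t)$-linear, for each $\lambda\vdash n$ we get
\begin{equation*}
\langle(-1)^{|\mu|-\ell(\mu)}\nabla^r m_\mu,s_\lambda\rangle=\sum_{\alpha\vDash n}c_\alpha(q)\,\langle\nabla^r\C_\alpha,s_\lambda\rangle.
\end{equation*}
By Proposition~\ref{prop-nabla-r-C-alpha}, each pairing $\langle\nabla^r\C_\alpha,s_\lambda\rangle$ lies in $\N[q,t]$. Hence the left-hand side is a polynomial in $q,t$ with nonnegative rational coefficients.

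\emph{Step 3: use integrality.} By Lemma~\ref{lem-qiu-zhang-integrality}, the same quantity lies in $\Z[q,t]$, up to the sign $\pm1$. A polynomial whose coefficients are simultaneously nonnegative rationals and integers has coefficients in $\N$. This gives the claim.

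All the substantive difficulty sits in the cited inputs, and the combination itself is routine. The step I would check most carefully is that the rational expansion of Step 1 really has nonnegative coefficients after the division by multiplicities. Since Lemma~\ref{lem-qiu-zhang-rational-C-expansion} is taken as given, this reduces to bookkeeping.
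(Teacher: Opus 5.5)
Your proposal is correct and matches the paper's argument: the nonnegative rational $\C_\alpha$-expansion from Lemma~\ref{lem-qiu-zhang-rational-C-expansion}, the positivity of each $\nabla^r\C_\alpha$ from Proposition~\ref{prop-nabla-r-C-alpha}, and the integrality from Lemma~\ref{lem-qiu-zhang-integrality} together force the coefficients into $\N[q,t]$. The point you flag as needing care is immediate, since dividing the $\N[q]$-expansion of $\tilde F_A$ by the positive integer $\prod_i m_i(\mu)!$ cannot create negative coefficients.
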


\section{Proof of the main results}

Since each Petrie symmetric function is a sum of monomial symmetric functions, Lemma \ref{lem-qiu-zhang-integrality} implies that the Schur coefficients of $\nabla^{r}G(k,n)$ satisfy the same integrality property, that is,
\begin{equation*}
\langle \nabla^r G(k,n),s_{\lambda}\rangle\in \Z[q,t].
\end{equation*}
Thus, to establish Schur positivity, it remains to show that, up to a sign, the original functions $G(k,n)$ can be expressed as linear combinations of $\C_{\alpha}$ with coefficients in $\Q_{\geq 0}[q]$.

\subsection{Proof of Theorem \ref{thm-k-mid-n}}
In joint work with Xin, we established the following identity and conjectured the Schur positivity of $(-1)^{ak}\nabla G(k,ak)$.

\begin{lemma}\cite{MR4921577}*{Theorem 3.7}
For integers $k\geq 2$ and $a\geq 1$, we have
\begin{equation}
(-1)^{ak}G(k,ak)=(-1)^{(k-1)a}m_{k^a}-q^{k-1}\sum_{i=1}^a\C_{ki}\left((-1)^{(k-1)(a-i)}m_{k^{a-i}}\right).
\end{equation}
\end{lemma}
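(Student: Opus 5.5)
The plan is to verify the identity directly. I will expand both sides in the products $m_{k^c}h_d$ and match them term by term, so I never need linear independence of these products. The two ingredients are a closed formula for $G(k,n)$ in terms of $m_{k^j}$ and $h$'s, and a closed formula for $\C_{m}(m_{k^b})$ obtained from the plethystic definition of $\C_m$.

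\emph{Step 1: formula for $G(k,n)$.} The generating function is
\begin{equation*}
\sum_n G(k,n)=\prod_i(1+x_i+\cdots+x_i^{k-1})=\prod_i\frac{1-x_i^k}{1-x_i}.
\end{equation*}
Since $\prod_i(1-x_i^k)=\sum_j(-1)^j e_j[p_k]=\sum_j(-1)^j m_{k^j}$, taking the degree-$n$ part gives
\begin{equation*}
G(k,n)=\sum_{j\ge 0}(-1)^j m_{k^j}\,h_{n-kj}.
\end{equation*}
In particular,
\begin{equation*}
(-1)^{ak}G(k,ak)=\sum_{s=0}^{a}(-1)^{ak+a-s}m_{k^{a-s}}h_{ks}=\sum_{s=0}^{a}(-1)^{(k-1)a+s}m_{k^{a-s}}h_{ks}.
\end{equation*}

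\emph{Step 2: action of $\C_m$ on $m_{k^b}$.} I use $m_{k^b}=e_b[p_k]$, i.e.\ $m_{k^b}[X]=e_b[X^k]$ in plethystic form. Put $Y=(q^{-1}-1)/z$. The addition formula (Lemma \ref{lem-addition-formulas}) gives
\begin{equation*}
m_{k^b}[X+Y]=\sum_j m_{k^{b-j}}[X]\,m_{k^j}[Y].
\end{equation*}
By homogeneity, $m_{k^j}[Y]=z^{-kj}e_j[q^{-k}-1]$. Lemma \ref{lem-addition-formulas} together with $e_i[-1]=(-1)^i$ then yields $e_j[u-1]=(-1)^j(1-u)$ for $j\ge1$. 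Extracting the coefficient of $z^m$ gives
\begin{equation*}
\C_m(m_{k^b})=\left(-\tfrac1q\right)^{m-1}\sum_{j=0}^{b}(-1)^j\varepsilon_j\, m_{k^{b-j}}h_{m+kj},\qquad \varepsilon_0=1,\ \varepsilon_j=1-q^{-k}\ (j\ge1).
\end{equation*}

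\emph{Step 3: substitute and telescope.} I insert the formula of Step 2 with $m=ki$ and $b=a-i$ into the right-hand side. Grouping by $s=i+j$ with $1\le i\le s$, the coefficient of $m_{k^{a-s}}h_{ks}$ for $s\ge1$ is
\begin{equation*}
-\sum_{i=1}^{s}q^{k-1}(-q)^{-(ki-1)}(-1)^{(k-1)(a-i)}(-1)^{s-i}\varepsilon_{s-i}.
\end{equation*}
The sign exponent is $ki-1+(k-1)(a-i)+s-i\equiv (k-1)a+s-1$, which is independent of $i$. The $q$-part is $q^{-k(i-1)}\varepsilon_{s-i}$. Summed over $i$, it telescopes: the terms $q^{-k(i-1)}(1-q^{-k})$ for $i<s$, plus $q^{-k(s-1)}$ for $i=s$, add up to $1$. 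Hence the coefficient is $(-1)^{(k-1)a+s}$. The $s=0$ term is the isolated $(-1)^{(k-1)a}m_{k^a}$. Both agree with Step 1.

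I expect the main obstacle to be the plethystic evaluation $e_j[u^k-1]$ and the sign bookkeeping in Step 3, where an off-by-one in the exponent of $-1/q$ would break the telescoping. The rest is routine.
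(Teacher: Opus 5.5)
Your proof is correct: the generating-function derivation reproduces Grinberg's identity \eqref{equ-G-mh}, your Step 2 is exactly Lemma \ref{lem-crmka}, and both the $i$-independent sign exponent $(k-1)a+s-1$ and the telescoping sum $\sum_{i=1}^{s-1}q^{-k(i-1)}(1-q^{-k})+q^{-k(s-1)}=1$ check out. The paper does not prove this lemma itself; it cites it from \cite{MR4921577}. Still, your argument is the same template the paper uses to prove Theorem \ref{thm-gkncexpansion}: expand both sides in the products $m_{k^c}h_d$ and telescope the resulting coefficients.
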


\begin{proposition}
For integers $k\geq 2$ and $a\geq 1$, there exists an element $R_{k,a}\in\Lambda$, which is a linear combination of the $\C_{\alpha}$ with coefficients in $\Q_{\geq 0}[q]$, such that
\begin{equation}\label{equ-mka-c-R-k-a}
(-1)^{(k-1)a}m_{k^a}=\sum_{i=1}^a\left(\frac{i}{a}[k]_q+\frac{a-i}{a}q^{k-1}\right)\C_{ki}\left((-1)^{(k-1)(a-i)}m_{k^{a-i}}\right)+R_{k,a}.
\end{equation}
\end{proposition}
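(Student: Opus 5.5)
The plan is to apply the Qiu--Zhang recursion \eqref{equ-qiu-zhang-recursion} with $c=k$ and $A=\{k_1,\dots,k_{a-1}\}$, the labeled multiset consisting of $a-1$ copies of $k$. The hypothesis $x\geq c$ holds trivially. On the left, $\tilde F_{A\cup\{k\}}=a!\,F_{k^a}=a!\,(-1)^{(k-1)a}m_{k^a}$. So dividing the whole identity by $a!$ should give \eqref{equ-mka-c-R-k-a}, once every term is sorted into one of two groups:
\begin{itemize}
\item the terms that produce the explicit sum $\sum_i(\cdots)\C_{ki}(\cdots)$;
\item the remaining terms, to be collected into $R_{k,a}$.
\end{itemize}
The subtracted terms $\sum_{j=1}^{k-1}q^{j-1}\C_j(\tilde F_{A\cup\{k-j\}})$ move to the right-hand side with a plus sign and go into $R_{k,a}$. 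By Lemma \ref{lem-qiu-zhang-tilde-F-C-expansion}, each $\tilde F_{A\cup\{k-j\}}$ lies in $\sum\N[q]\C_\alpha$. Since $\C_j\C_\alpha=\C_{(j,\alpha)}$ by definition, these terms are $\N[q]$-combinations of $\C_\alpha$'s.

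Next I would analyze the right-hand side by grouping subsets $T\subseteq A$ according to $|T|$. For each $T$ we have $S_T=k|T|$, and $A\setminus T$ consists of $a-1-|T|$ copies of $k$.

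\emph{The $\beta_T$ terms.} Since every $x\in T$ equals $c=k$, the sum $q^c+\dots+q^{x-1}$ is empty, so $\beta_T=(|T|+1)![k]_q$. Write $|T|=i-1$ with $1\le i\le a$. There are $\binom{a-1}{i-1}$ such $T$, and $\tilde F_{A\setminus T}=(a-i)!\,(-1)^{(k-1)(a-i)}m_{k^{a-i}}$. The total coefficient of $\C_{ki}\big((-1)^{(k-1)(a-i)}m_{k^{a-i}}\big)$ is therefore
\[
\binom{a-1}{i-1}\,i!\,(a-i)!\,[k]_q=(a-1)!\,i\,[k]_q .
\]
After dividing by $a!$ this becomes $\frac{i}{a}[k]_q$; the case $T=\varnothing$ is $i=1$.

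\emph{The $\alpha_{T,b}$ terms with $b=k$.} Here $(c+x-b-1)\bmod x=(k-1)\bmod k=k-1$, so $\alpha_{T,k}=(|T|-1)!\,|T|\,q^{k-1}=|T|!\,q^{k-1}$. The corresponding operator is $\C_{k|T|}$ applied to $\tilde F_{(A\setminus T)\cup\{k\}}=(a-|T|)!\,F_{k^{a-|T|}}$. Setting $i=|T|\in\{1,\dots,a-1\}$, the total coefficient is
\[
\binom{a-1}{i}\,i!\,(a-i)!\,q^{k-1}=(a-1)!\,(a-i)\,q^{k-1},
\]
which gives $\frac{a-i}{a}q^{k-1}$ after dividing by $a!$. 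For $i=a$ this coefficient is $0$, which agrees with the stated formula.

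All remaining terms go into $R_{k,a}$. These are the $\alpha_{T,b}$ terms with $1\le b\le S_T$ and $b\neq k$; the terms with $T=\varnothing$ vanish. Each has a coefficient in $\N[q]$ and has the form $\C_{m}(\tilde F_B)$ for some labeled multiset $B$ with $m\geq 1$, since $c+S_T-b\geq c\geq1$. By Lemma \ref{lem-qiu-zhang-tilde-F-C-expansion} and $\C_m\C_\alpha=\C_{(m,\alpha)}$, each such term lies in $\sum\N[q]\C_\alpha$.

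Then $R_{k,a}$ is $1/a!$ times an $\N[q]$-combination of $\C_\alpha$'s, so its coefficients lie in $\Q_{\ge0}[q]$. The only delicate step is the bookkeeping: the multiplicity factors $m_k(\cdot)!$ in $\tilde F$ must combine with the binomial counts and the factorials in $\alpha_{T,b}$ and $\beta_T$ to give exactly $\frac{i}{a}[k]_q$ and $\frac{a-i}{a}q^{k-1}$. The computation above checks this.
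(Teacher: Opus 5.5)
Your proposal is correct and follows the paper's proof essentially line by line. You apply the Qiu--Zhang recursion with $c=k$ and $A=\{k_1,\dots,k_{a-1}\}$, extract $\frac{i}{a}[k]_q$ from the $\beta_T$-terms and $\frac{a-i}{a}q^{k-1}$ from the $\alpha_{T,k}$-terms, and absorb everything else into $R_{k,a}$ via Lemma~\ref{lem-qiu-zhang-tilde-F-C-expansion}. Your remark that every leftover operator index is at least $1$ (indeed at least $k$), so that $\C_m\C_\alpha=\C_{(m,\alpha)}$ is a genuine composition term, is slightly more careful than the paper's ``$d\geq 0$''.
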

\begin{proof}

Consider the labeled multiset $A=\{k_1,k_2,\ldots,k_{a-1}\}$ consisting of $a-1$ labeled copies of $k$. We apply \eqref{equ-qiu-zhang-recursion} with $c=k$. The condition $x\geq c$ is naturally satisfied since all elements in $A$ are equal to $k$, which yields
\begin{equation*}
\tilde{F}_{A\cup\{k\}}=\sum_{j=1}^{k-1}q^{j-1}\C_j\left(\tilde{F}_{A\cup\{k-j\}}\right)+\sum_{T\subseteq A}\left(\sum_{b=1}^{S_T}\alpha_{T,b}\,\C_{k+S_T-b}
\left(\tilde{F}_{(A\setminus T)\cup\{b\}}\right)+\beta_T\,\C_{k+S_T}\left(\tilde{F}_{A\setminus T}\right)
\right).
\end{equation*}
By definition, since $A\cup\{k\}=\{k_1,k_2,\ldots,k_a\}$, we obtain $\tilde F_{A\cup\{k\}}=a!(-1)^{(k-1)a}m_{k^a}$. Dividing the above identity by $a!$, we collect the terms of the form $\C_{ki}\left((-1)^{(k-1)(a-i)}m_{k^{a-i}}\right)$ for $1\leq i\leq a$.

First, consider the $\beta_T$-terms. Fix $T\subseteq A$ and let $s:=|T|$. Then $S_T=ks$. Since every element $x\in T$ is equal to $k$, the second sum in the definition of $\beta_T$ is empty, meaning $q^k+q^{k+1}+\cdots+q^{k-1}=0$. Hence, we have $\beta_T=(s+1)![k]_q$. Evaluating this yields
\begin{align*}
\frac{1}{a!}\sum_{T\subseteq A}\beta_T\,\C_{k+S_T}\left(\tilde F_{A\setminus T}\right)&=\frac{1}{a!}\sum_{s=0}^{a-1}\sum_{T\subseteq A:|T|=s}\beta_T\,\C_{k+S_T}\left(\tilde{F}_{A\setminus T}\right)\\
&=\frac{1}{a!}\sum_{s=0}^{a-1}\binom{a-1}{s}(s+1)![k]_q\C_{k(s+1)}\left((a-1-s)!
(-1)^{(k-1)(a-1-s)}m_{k^{a-1-s}}\right)\\
&=\sum_{s=0}^{a-1}\frac{s+1}{a}[k]_q\C_{k(s+1)}\left((-1)^{(k-1)(a-1-s)}
m_{k^{a-1-s}}\right)\\
&=\sum_{i=1}^{a}\frac{i}{a}[k]_q\C_{ki}\left((-1)^{(k-1)(a-i)}m_{k^{a-i}}\right).
\end{align*}

Next, consider the $\alpha_{T,b}$-terms with $b=k$. If $T=\varnothing$, then $\alpha_{T,k}=0$. Assume $|T|=s\geq 1$. Since every $x\in T$ equals $k$, we have
\begin{align*}
\alpha_{T,k}=(s-1)!\sum_{x\in T}q^{(k+x-k-1)\bmod x}=(s-1)!\sum_{x\in T}q^{(x-1)\bmod x}=s!q^{k-1}.
\end{align*}
The sum of the corresponding terms is
\begin{align*}
\frac{1}{a!}\sum_{T\subseteq A}\alpha_{T,k}\C_{k+S_{T}-k}\left(\tilde{F}_{(A\backslash T)\cup k}\right)&=\frac{1}{a!}\sum_{s=1}^{a-1}\sum_{T\subseteq A: |T|=s}\alpha_{T,k}\C_{k+S_{T}-k}\left(\tilde{F}_{(A\backslash T)\cup k}\right)\\
&=\frac{1}{a!}\sum_{s=1}^{a-1}\binom{a-1}{s}s!q^{k-1}\C_{ks}\left((a-s)!(-1)^{(k-1)(a-s)}m_{k^{a-s}}\right)\\
&=\sum_{s=1}^{a-1}\frac{a-s}{a}q^{k-1}\C_{ks}\left((-1)^{(k-1)(a-s)}m_{k^{a-s}}\right)\\
&=\sum_{i=1}^{a}\frac{a-i}{a}q^{k-1}\C_{ki}\left((-1)^{(k-1)(a-i)}m_{k^{a-i}}\right).
\end{align*}

Indeed, every remaining term is of the form
$c(q)\C_d(\tilde{F}_B)$, where $c(q)\in \N[q]$ and $d\geq 0$. By Lemma \ref{lem-qiu-zhang-tilde-F-C-expansion}, $\tilde{F}_B\in \sum_\alpha \N[q]\C_\alpha$. After division by $a!$, the coefficients lie in $\Q_{\geq 0}[q]$. This completes the proof.
\end{proof}

\begin{corollary}
For integers $k\geq 2$, $a\geq 1$, and $r\geq 1$, we have
\begin{equation}
\langle (-1)^{ak}\nabla^r G(k,ak),s_{\lambda}\rangle\in \N[q,t].
\end{equation}
\end{corollary}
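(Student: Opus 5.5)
Substitute the Proposition into the Xin--Qu identity (Theorem 3.7 of \cite{MR4921577}) and watch the $q^{k-1}$ terms cancel. Write $M_j := (-1)^{(k-1)j}m_{k^j}$, so that $M_0=1$. The identity reads
\begin{equation*}
(-1)^{ak}G(k,ak)=M_a-q^{k-1}\sum_{i=1}^a\C_{ki}(M_{a-i}).
\end{equation*}
Replacing $M_a$ using \eqref{equ-mka-c-R-k-a} gives
\begin{equation*}
(-1)^{ak}G(k,ak)=\sum_{i=1}^a\frac{i}{a}\left([k]_q-q^{k-1}\right)\C_{ki}(M_{a-i})+R_{k,a}.
\end{equation*}
Here we used $\frac{i}{a}[k]_q+\frac{a-i}{a}q^{k-1}-q^{k-1}=\frac{i}{a}([k]_q-q^{k-1})$. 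Since $k\geq 2$, we have $[k]_q-q^{k-1}=[k-1]_q\in\N[q]$, so every coefficient $\frac{i}{a}[k-1]_q$ lies in $\Q_{\geq 0}[q]$.

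Next I show that each $\C_{ki}(M_{a-i})$ is a $\Q_{\geq 0}[q]$-combination of the $\C_\alpha$. Lemma \ref{lem-qiu-zhang-rational-C-expansion}, applied to $\mu=k^{a-i}$, gives $M_{a-i}=\sum_\beta c_\beta\C_\beta$ with $c_\beta\in\Q_{\geq 0}[q]$; the sign matches because $|\mu|-\ell(\mu)=(k-1)(a-i)$. For $i=a$ we have $M_0=1$, which gives $\C_{(ka)}$ directly. Since $\C_{ki}$ is linear over $\Q(q)$ and $\C_{ki}\C_\beta=\C_{(ki,\beta)}$ by definition, we get
\begin{equation*}
\C_{ki}(M_{a-i})=\sum_\beta c_\beta\,\C_{(ki,\beta)},
\end{equation*}
which is again a nonnegative combination. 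By the Proposition, $R_{k,a}$ is already such a combination. Hence $(-1)^{ak}G(k,ak)=\sum_\alpha d_\alpha\C_\alpha$ with $d_\alpha\in\Q_{\geq 0}[q]$.

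Finally, apply $\nabla^r$ and pair with $s_\lambda$. This gives
\begin{equation*}
\langle (-1)^{ak}\nabla^r G(k,ak),s_\lambda\rangle=\sum_\alpha d_\alpha(q)\,\langle\nabla^r\C_\alpha,s_\lambda\rangle.
\end{equation*}
By Proposition \ref{prop-nabla-r-C-alpha} each term lies in $\Q_{\geq 0}[q]\cdot\N[q,t]$, so the whole expression is a polynomial in $q,t$ with nonnegative rational coefficients. On the other hand, $G(k,ak)$ is a sum of monomial functions, so Lemma \ref{lem-qiu-zhang-integrality} puts the same coefficient in $\Z[q,t]$. A polynomial lying in both sets lies in $\N[q,t]$.

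The only genuinely delicate point is the cancellation in the first step. The negative term $-q^{k-1}\C_{ki}(M_{a-i})$ must be absorbed entirely by the matching term of the Proposition, and this works precisely because the coefficient there is $\frac{i}{a}[k]_q+\frac{a-i}{a}q^{k-1}$, with $[k]_q$ dominating $q^{k-1}$ termwise. The rest is bookkeeping: linearity of $\C_{ki}$ and the rational-to-integral passage.
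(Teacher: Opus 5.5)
Your proof is correct and follows the paper's argument: substitute the Proposition into the Xin--Qu identity, use $[k]_q-q^{k-1}=[k-1]_q$ to get nonnegative coefficients $\frac{i}{a}[k-1]_q$, and combine the resulting $\Q_{\geq 0}[q]$-expansion in the $\C_\alpha$ with Proposition~\ref{prop-nabla-r-C-alpha} and the integrality from Lemma~\ref{lem-qiu-zhang-integrality}. You also spell out, via Lemma~\ref{lem-qiu-zhang-rational-C-expansion} and $\C_{ki}\C_\beta=\C_{(ki,\beta)}$, why each $\C_{ki}\bigl((-1)^{(k-1)(a-i)}m_{k^{a-i}}\bigr)$ is itself such a nonnegative combination, which the paper leaves implicit.
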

\begin{proof}
Using the identity $[k]_q=[k-1]_q+q^{k-1}$, we obtain
\begin{align*}
\frac{i}{a}[k]_q+\frac{a-i}{a}q^{k-1}=\frac{i}{a}([k-1]_q+q^{k-1})+\frac{a-i}{a}q^{k-1}=q^{k-1}+\frac{i}{a}[k-1]_q.
\end{align*}
Substituting this into \eqref{equ-mka-c-R-k-a} yields
\begin{align*}
(-1)^{ak}G(k,ak)=\sum_{i=1}^{a}\frac{i}{a}[k-1]_q\C_{ki}\left((-1)^{(k-1)(a-i)}m_{k^{a-i}}\right)+R_{k,a}\in\sum_{\alpha\vDash ak}\Q_{\geq 0}[q]\C_{\alpha}.
\end{align*}
This completes the proof.
\end{proof}

\subsection{Proof of Theorem \ref{thm-k-nmid-n}}
For the case $k \nmid n$, we express $G(k,n)$ in terms of the $\C_a$-operators applied to monomial symmetric functions indexed by rectangular partitions. We begin with the following lemma, whose proof uses calculations similar to those in \cite{MR4921577}.

\begin{lemma}\label{lem-crmka}
For integers $k\geq 2$, $a\geq 0$, and $r\in \Z$, we have
\begin{align}
\C_{r}m_{k^a}=\left(-\frac{1}{q}\right)^{r-1}\left(m_{k^a}h_r+\sum_{i=0}^{a-1}(-1)^{a-i}\frac{q^k-1}{q^k}m_{k^i}h_{k(a-i)+r}\right).
\end{align}
\end{lemma}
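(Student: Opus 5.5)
We substitute $f=m_{k^a}$ directly into the definition of $\C_r$ and expand the plethystic shift with the addition formulas. Write $Y=\frac{1-1/q}{z}$. Then
\begin{equation*}
\C_r m_{k^a}=\left(-\frac1q\right)^{r-1} m_{k^a}\!\left[X-Y\right]\sum_{n\geq0}z^n h_n[X]\Big|_{z^r}.
\end{equation*}
The first step is to compute $m_{k^a}[X-Y]$. We use the coproduct of monomial functions: for an alphabet difference, $m_\mu[X+W]=\sum m_\nu[X]\,m_\rho[W]$, summed over ways of splitting the multiset of parts of $\mu$ into $\nu\cup\rho$. For $\mu=k^a$ this reads
\begin{equation*}
m_{k^a}[X-Y]=\sum_{i=0}^{a} m_{k^i}[X]\,m_{k^{a-i}}[-Y].
\end{equation*}
This is valid plethystically for $W=-Y$ because it is just the Hopf coproduct.

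The second step, which we expect to be the main obstacle, is to evaluate $m_{k^j}[-Y]$ for $j\geq1$. Set $u=1/q$ and $v=1/z$, so that $Y=(1-u)v$.

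First, $m_{k^j}[(1-u)v]$ is obtained via the power sums: $p_m[(1-u)v]=(1-u^m)v^m$. A cleaner route is to note that $m_\mu[-E]=(-1)^{|\mu|}(\omega m_\mu)[E]$ by Lemma \ref{lem-plethystic-minus}. Alternatively, we can use that $m_{k^j}[W]$ is a polynomial in $p_k[W],p_{2k}[W],\dots$ obtained from $e_j$ evaluated at the alphabet $W^k$. That is, $m_{k^j}[W]=e_j[p_k[W]]$ plethystically, i.e. $e_j$ applied to the "alphabet" whose power sums are $p_{km}[W]$.

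Hence $m_{k^j}[-(1-u)v]=e_j\!\left[-(1-u^k)v^k\right]=(-1)^j h_j[(1-u^k)v^k]$. By Lemma \ref{lem-plethystic-h} this equals $(-1)^j(1-q^{-k})z^{-kj}$ for $j\geq1$. We would double-check the sign conventions here carefully, including that $m_{k^j}$ corresponds to $e_j$ and not $h_j$ under $p_m\mapsto p_{km}$; this holds since $m_{k^j}=e_j[x_1^k+x_2^k+\cdots]$. The value at $j=0$ is $1$.

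Finally we substitute and extract coefficients:
\begin{equation*}
m_{k^a}[X-Y]=m_{k^a}+\sum_{i=0}^{a-1}(-1)^{a-i}\frac{q^k-1}{q^k}\,m_{k^i}\,z^{-k(a-i)}.
\end{equation*}
Multiplying by $\sum_n z^nh_n[X]$ and taking the coefficient of $z^r$ picks $h_r$ in the first term and $h_{k(a-i)+r}$ in the others. This gives exactly the claimed formula; the convention $h_m=0$ for $m<0$ covers all $r\in\Z$, and for $a=0$ the sum is empty.
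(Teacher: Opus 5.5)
Your proof is correct and essentially the same as the paper's. The only difference is the order of steps: you split $m_{k^a}[X-Y]$ with the monomial coproduct and then evaluate $m_{k^j}[-Y]=e_j[p_k[-Y]]=(-1)^j(1-q^{-k})z^{-kj}$, whereas the paper first writes $m_{k^a}=e_a[p_k[X]]$ and then applies the addition formula for $e_a$, together with Lemmas \ref{lem-plethystic-minus} and \ref{lem-plethystic-h}.
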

\begin{proof}
For $a=0$, we have, by definition, $\C_{r}(1)=(-1/q)^{r-1}h_r$. Now assume that $a\geq 1$, using the definition of the $\C_r$-operators together with Lemmas \ref{lem-addition-formulas}, \ref{lem-plethystic-minus}, and \ref{lem-plethystic-h}, we obtain
\begin{align*}
\C_{r}m_{k^a}&=\C_{r}e_{a}[p_k[X]]\\
&=\left(-\frac{1}{q}\right)^{r-1}e_{a}\left[p_k[X]-\frac{1-\frac{1}{q^k}}{z^k}\right]\sum_{n\geq 0}z^n h_{n} \Big |_{z^{r}}\\
&=\left(-\frac{1}{q}\right)^{r-1}\sum_{i=0}^{a}e_{i}[p_k[X]]e_{a-i}\left[-\frac{1-\frac{1}{q^k}}{z^k}\right]\sum_{n\geq 0}z^n h_{n}\Big |_{z^{r}}\\
&=\left(-\frac{1}{q}\right)^{r-1}\sum_{i=0}^{a}m_{k^i}(-1)^{a-i}h_{a-i}\left[\frac{1-\frac{1}{q^k}}{z^k}\right]\sum_{n\geq 0}z^n h_{n}\Big |_{z^{r}}\\
&=\left(-\frac{1}{q}\right)^{r-1}\sum_{i=0}^{a}(-1)^{a-i}m_{k^i}h_{a-i}\left[\frac{q^k-1}{q^k}\right]\frac{1}{z^{k(a-i)}}\sum_{n\geq 0}z^n h_{n}\Big |_{z^{r}}\\
&=\left(-\frac{1}{q}\right)^{r-1}\left(m_{k^a}h_r+\sum_{i=0}^{a-1}(-1)^{a-i}\frac{q^k-1}{q^k}m_{k^i}h_{k(a-i)+r}\right).
\end{align*}
This completes the proof.
\end{proof}

Grinberg proved the following symmetric function identity.

\begin{lemma}\cite{MR4511157}*{Theorem 2.19}
For integers $k\geq 2$ and $n\geq 1$, 
\begin{equation}\label{equ-G-mh}
G(k,n)=\sum\limits_{i=0}^{\floor{n/k}}(-1)^{i}m_{k^{i}}h_{n-ki},    
\end{equation}
where $\floor{n/k}$ denotes the greatest integer less than or equal to $n/k$.
\end{lemma}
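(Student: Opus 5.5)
The statement is Grinberg's identity \eqref{equ-G-mh}, which expresses $G(k,n)$ through the products $m_{k^i}h_{n-ki}$. I will prove it by a generating-function argument in the variables $x_1,x_2,\dots$ rather than by expanding monomials term by term.

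The starting point is that $G(k,n)$ is the degree-$n$ part of
\begin{equation*}
\prod_{j\geq 1}\left(1+x_j+x_j^2+\cdots+x_j^{k-1}\right).
\end{equation*}
Indeed, expanding the product produces each monomial $x^\beta$ with all exponents $\beta_j<k$ exactly once. Grouping these monomials by the sorted exponent vector gives exactly $\sum_{\mu\vdash n,\ \mu_1<k} m_\mu$.

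The key step is to write each factor as $\frac{1-x_j^k}{1-x_j}$. This gives
\begin{equation*}
\sum_{n\geq 0}G(k,n)=\prod_j\frac{1}{1-x_j}\cdot\prod_j\left(1-x_j^k\right)=\left(\sum_{m\geq 0}h_m\right)\left(\sum_{i\geq 0}(-1)^i e_i[p_k]\right).
\end{equation*}
Here $e_i[p_k]=e_i(x_1^k,x_2^k,\dots)=m_{k^i}$, which is the same identification used in the proof of Lemma \ref{lem-crmka}. Extracting the homogeneous component of degree $n$ from both sides requires $m+ki=n$ with $m\geq 0$. This forces $0\leq i\leq\floor{n/k}$, and yields \eqref{equ-G-mh}.

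There is no serious obstacle. The only points needing care are that the formal power series manipulations are legitimate, which holds because each degree receives finitely many contributions, and that the coefficient of each monomial in the product is $0$ or $1$, so that the grouping into $m_\mu$ is exact.
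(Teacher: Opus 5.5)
Your proof is correct and complete. The paper gives no proof of this statement. It quotes the identity from Grinberg (his Theorem 2.19) and uses it only as input to Theorem \ref{thm-gkncexpansion}, so there is no argument in the paper to compare yours against. What your proposal adds is a self-contained derivation. The factorization $1+x+\cdots+x^{k-1}=(1-x^k)/(1-x)$ splits the generating function of the $G(k,n)$ into $\sum_m h_m$ times $\sum_i(-1)^i e_i[p_k]$. The identification $e_i[p_k]=m_{k^i}$ you then use is exactly the one the paper uses in the proof of Lemma \ref{lem-crmka}, so the argument fits the paper's own toolkit.

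The same computation can be done monomial by monomial. The coefficient of $x^\beta$ in $m_{k^i}h_{n-ki}$ is $\binom{s}{i}$, where $s$ is the number of indices $j$ with $\beta_j\geq k$. Since $sk\leq n$, every $i\leq s$ lies in the range $0\leq i\leq\floor{n/k}$. The alternating sum $\sum_{i=0}^{s}(-1)^i\binom{s}{i}$ is therefore $1$ if $s=0$ and $0$ otherwise, which is precisely the defining condition $\mu_1<k$. Your generating-function factorization packages this inclusion--exclusion.

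Your argument also gives the identity for all $k\geq 1$ and $n\geq 0$, though the paper only needs $k\geq 2$ and $n\geq 1$.
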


Combining the two identities above, we obtain the following result.

\begin{theorem}\label{thm-gkncexpansion}
For integers $k\geq 2$ and $n\geq 1$, 
\begin{equation}\label{equ-Gkn-C-m}
(-1)^{n-1}G(k,n)=q^{n-k\floor{n/k}-1}\sum_{i=0}^{\floor{n/k}}\C_{n-ki}\left((-1)^{(k-1)i}m_{k^i}\right).
\end{equation}
\end{theorem}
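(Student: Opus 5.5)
The plan is to substitute Lemma~\ref{lem-crmka} into every term on the right-hand side of \eqref{equ-Gkn-C-m}, collect the result in the family $\{m_{k^j}h_{n-kj}\}_{0\le j\le \floor{n/k}}$, and compare with Grinberg's formula \eqref{equ-G-mh}. Set $N:=\floor{n/k}$ and $\rho:=n-kN$, so that $0\le\rho<k$. For $0\le i\le N$, apply Lemma~\ref{lem-crmka} with $a=i$ and $r=n-ki$. This is legitimate for every $i$: the lemma holds for all $r\in\Z$, which covers the case $\rho=0$ and $i=N$, where $\C_0$ appears. Every complete homogeneous function that arises has degree $k(i-j)+n-ki=n-kj\ge 0$.

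\textbf{Step 1: sign and power of $q$ for each term.} In the term with index $i$, the prefactor is
\[
q^{\rho-1}(-1)^{(k-1)i}\left(-\frac1q\right)^{n-ki-1}.
\]
The combined sign is $(-1)^{(k-1)i+n-ki-1}=(-1)^{n-1}(-1)^{i}$. The exponent of $q$ is $\rho-1-(n-ki-1)=-k(N-i)$. Hence the right-hand side of \eqref{equ-Gkn-C-m} equals
\[
(-1)^{n-1}\sum_{i=0}^{N}(-1)^i q^{-k(N-i)}\Bigl(m_{k^i}h_{n-ki}+(1-q^{-k})\sum_{j=0}^{i-1}(-1)^{i-j}m_{k^j}h_{n-kj}\Bigr).
\]

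\textbf{Step 2: extract the coefficient of $m_{k^j}h_{n-kj}$.} Fix $j$ and interchange the order of summation. The diagonal term $i=j$ contributes $(-1)^jq^{-k(N-j)}$. Each term with $j<i\le N$ contributes $(-1)^i(-1)^{i-j}=(-1)^j$ times $(1-q^{-k})q^{-k(N-i)}$. After multiplying by $(-1)^{n-1}$, the coefficient is therefore
\[
(-1)^j\Bigl(q^{-k(N-j)}+(1-q^{-k})\sum_{m=0}^{N-j-1}q^{-km}\Bigr).
\]
The sum telescopes to $1-q^{-k(N-j)}$, so the bracket equals $1$.

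\textbf{Step 3: conclude.} Multiplying back by $(-1)^{n-1}$, the right-hand side becomes $(-1)^{n-1}\sum_{j=0}^{N}(-1)^jm_{k^j}h_{n-kj}$. By \eqref{equ-G-mh} this is $(-1)^{n-1}G(k,n)$, which proves the theorem.

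The only real obstacle is bookkeeping: getting the signs $(-1)^{(k-1)i}$ and $(-1/q)^{n-ki-1}$ right, and checking that the prefactor $q^{\rho-1}$ is exactly what turns the $q$-powers into the geometric series that telescopes. I would verify the case $k=2$, $n=3$ by hand as a sanity check.
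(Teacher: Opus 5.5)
Your proposal is correct and follows the paper's proof: you substitute Lemma~\ref{lem-crmka} with $a=i$, $r=n-ki$, interchange the double sum, telescope $(1-q^{-k})\sum_{i=j+1}^{N}q^{-k(N-i)}=1-q^{-k(N-j)}$, and compare with Grinberg's identity \eqref{equ-G-mh}. The one slip is wording: in Step~2 the phrase ``after multiplying by $(-1)^{n-1}$'' should read ``before'', since the displayed coefficient does not yet include that factor, which you restore in Step~3.
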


\begin{proof}
By Lemma \ref{lem-crmka}, we have
\begin{equation*}
\C_{n-ki}m_{k^i}=\left(-\frac{1}{q}\right)^{n-ki-1}\left(m_{k^i}h_{n-ki}+\sum_{j=0}^{i-1}(-1)^{i-j}\frac{q^k-1}{q^k}m_{k^j}h_{n-kj}\right).
\end{equation*}
Substituting this into the right-hand side of the desired identity and expanding, we obtain
\begin{align*}
\mathrm{RHS} &= q^{n-k\floor{n/k}-1}\sum_{i=0}^{\floor{n/k}}(-1)^{(k-1)i}\left(-\frac{1}{q}\right)^{n-ki-1}\left(m_{k^i}h_{n-ki}+\sum_{j=0}^{i-1}(-1)^{i-j}\frac{q^k-1}{q^k}m_{k^j}h_{n-kj}\right)\\ 
&=\sum_{i=0}^{\floor{n/k}}(-1)^{n-i-1}q^{k(i-\floor{n/k})}m_{k^i}h_{n-ki} + \sum_{i=1}^{\floor{n/k}}\sum_{j=0}^{i-1}(-1)^{n-j-1}q^{k(i-\floor{n/k})}(1-q^{-k})m_{k^j}h_{n-kj}.
\end{align*}

For the second term, we exchange the order of summation:
\begin{align*}
\sum_{i=1}^{\floor{n/k}}\sum_{j=0}^{i-1}(-1)^{n-j-1}&q^{k(i-\floor{n/k})}(1-q^{-k})m_{k^j}h_{n-kj}\\
&=\sum_{j=0}^{\floor{n/k}-1}\sum_{i=j+1}^{\floor{n/k}}(-1)^{n-j-1}q^{k(i-\floor{n/k})}(1-q^{-k})m_{k^j}h_{n-kj}\\
&=\sum_{j=0}^{\floor{n/k}-1}(-1)^{n-j-1}m_{k^j}h_{n-kj}\sum_{i=j+1}^{\floor{n/k}}\left(q^{k(i-\floor{n/k})}-q^{k(i-1-\floor{n/k})}\right)\\ 
&=\sum_{j=0}^{\floor{n/k}-1}(-1)^{n-j-1}(1-q^{k(j-\floor{n/k})})m_{k^j}h_{n-kj}.
\end{align*}
Adding this to the first term gives
\begin{align*}
\mathrm{RHS}&=\sum_{i=0}^{\floor{n/k}}(-1)^{n-i-1}q^{k(i-\floor{n/k})}m_{k^i}h_{n-ki}+ \sum_{j=0}^{\floor{n/k}-1}(-1)^{n-j-1}(1-q^{k(j-\floor{n/k})})m_{k^j}h_{n-kj} \\
&=\sum_{i=0}^{\floor{n/k}}(-1)^{n-i-1}m_{k^i}h_{n-ki}.
\end{align*}

On the other hand, by \eqref{equ-G-mh}, the left-hand side is
\begin{equation*}
\mathrm{LHS}=\sum_{i=0}^{\floor{n/k}}(-1)^{n-i-1}m_{k^i}h_{n-ki}.
\end{equation*}
Thus $\mathrm{LHS}=\mathrm{RHS}$, completing the proof.
\end{proof}

For $k\nmid n$, we have $n-ki>0$ for all $0\leq i\leq \floor{n/k}$. Applying $\nabla^r$ and Proposition \ref{prop-nabla-r-C-alpha} gives Schur coefficients in $\Q_{\ge0}[q,t]$. Combining this with the integrality statement yields the following result.

\begin{corollary}
For integers $k\geq 2$, $n\geq 1$, and $r\geq 1$ with $k\nmid n$, we have
\begin{equation}
\langle (-1)^{n-1}\nabla^{r}G(k,n), s_{\lambda}\rangle\in \N[q,t].
\end{equation}
\end{corollary}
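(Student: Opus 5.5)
The plan is to start from the expansion of Theorem \ref{thm-gkncexpansion},
\begin{equation*}
(-1)^{n-1}G(k,n)=q^{n-k\floor{n/k}-1}\sum_{i=0}^{\floor{n/k}}\C_{n-ki}\left((-1)^{(k-1)i}m_{k^i}\right),
\end{equation*}
and show that the right-hand side lies in $\sum_{\alpha\vDash n}\Q_{\geq 0}[q]\C_{\alpha}$.

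First, the prefactor is harmless. Since $k\nmid n$, the residue $n-k\floor{n/k}$ lies in $\{1,\dots,k-1\}$, so the exponent of $q$ is a nonnegative integer and $q^{n-k\floor{n/k}-1}\in\N[q]$.

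Next, for each $i$ the sign matches the one in the Qiu--Zhang results. We have $(-1)^{(k-1)i}=(-1)^{|k^i|-\ell(k^i)}$, so $(-1)^{(k-1)i}m_{k^i}$ is exactly the signed monomial function for the rectangle $k^i$. By Lemma \ref{lem-qiu-zhang-rational-C-expansion}, it can be written as $\sum_{\beta\vDash ki}c_\beta(q)\C_\beta$ with $c_\beta\in\Q_{\geq 0}[q]$. For $i=0$ this is just the constant $1=\C_{\varnothing}$.

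Then apply $\C_{n-ki}$ term by term. Because $k\nmid n$, the index $n-ki$ is a positive integer for every $0\leq i\leq\floor{n/k}$. Hence $\C_{n-ki}\C_\beta=\C_{(n-ki,\beta)}$ with $(n-ki,\beta)\vDash n$ a genuine composition. This is the one point where nondivisibility is used in an essential way: if $k\mid n$, the term $i=n/k$ would produce $\C_0$, which is not a composition index, and that is why the divisible case needed the separate argument of Theorem \ref{thm-k-mid-n}. Summing over $i$ gives
\begin{equation*}
(-1)^{n-1}G(k,n)=\sum_{\alpha\vDash n}d_\alpha(q)\C_\alpha,\qquad d_\alpha\in\Q_{\geq 0}[q].
\end{equation*}

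Finally, apply $\nabla^r$ and pair with $s_\lambda$. By Proposition \ref{prop-nabla-r-C-alpha}, each $\langle\nabla^r\C_\alpha,s_\lambda\rangle$ lies in $\N[q,t]$, so $\langle(-1)^{n-1}\nabla^rG(k,n),s_\lambda\rangle\in\Q_{\geq0}[q,t]$. On the other hand, $G(k,n)$ is a finite sum of $m_\mu$, so Lemma \ref{lem-qiu-zhang-integrality} places the same coefficient in $\Z[q,t]$. A polynomial lying in both $\Q_{\geq 0}[q,t]$ and $\Z[q,t]$ has nonnegative integer coefficients, so it lies in $\N[q,t]$.

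None of these steps is hard once Theorem \ref{thm-gkncexpansion} is available. The only points needing care are checking that the prefactor exponent is nonnegative and that every $\C$-index $n-ki$ is positive, and both reduce to $k\nmid n$.
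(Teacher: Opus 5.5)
Your proposal is correct and follows the paper's argument. Like the paper, you expand via Theorem \ref{thm-gkncexpansion}, use $k\nmid n$ to make every index $n-ki$ positive, pass to a $\Q_{\geq 0}[q]$-combination of $\C_\alpha$, apply Proposition \ref{prop-nabla-r-C-alpha}, and conclude with integrality; you also spell out two points the paper leaves implicit, namely that the exponent of the prefactor $q^{n-k\floor{n/k}-1}$ is nonnegative and that Lemma \ref{lem-qiu-zhang-rational-C-expansion} is applied to $(-1)^{(k-1)i}m_{k^i}$.
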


\begin{remark}
If $n=ak$, the expansion in \eqref{equ-Gkn-C-m} will include a summand containing the operator $\C_{0}$. However, the expression $\nabla\C_0 \C_\alpha$ does not, in general, preserve Schur positivity for a composition $\alpha$.
\end{remark}

\bigskip
\noindent
\textbf{Acknowledgements:} We would like to thank Michele D'Adderio for valuable conversations.

\bibliographystyle{amsalpha}
\bibliography{Biblebib}

\end{document}